\setlist{nolistsep}
\newtheorem{thm}{Theorem}[section]
\newtheorem{lem}[thm]{Lemma}
\newtheorem{problems}[thm]{Problem}
\newtheorem{prop}[thm]{Proposition}
\newtheorem{cor}[thm]{Corollary}
\newtheorem{conj}[thm]{Conjecture}
\newtheorem{defi}[thm]{Definition}
\newcommand{\ex}{\mathsf{ex}}
\newcommand{\R}{\mathbb{R}}
\begin{document}


%
%
%
\title*{Further applications of the Container Method}

\label{container} 

 \author{J\'ozsef Balogh \and Adam Zs. Wagner}
 
\institute{J\'ozsef Balogh \at Department of Mathematical Sciences,
 University of Illinois at Urbana-Champaign, Urbana, Illinois 61801, USA \email{
jobal@math.uiuc.edu}. Research is partially supported by Simons Fellowship, NSA Grant H98230-15-1-0002, NSF
CAREER Grant
 DMS-0745185, NSF Grant DMS-1500121, Arnold O. Beckman Research Award (UIUC Campus Research Board 15006) and Marie Curie FP7-PEOPLE-2012-IIF 327763.
 \and Adam Zs. Wagner
 \at Department of Mathematical Sciences,
 University of Illinois at Urbana-Champaign, Urbana, Illinois 61801, USA \email{
zawagne2@illinois.edu}.}
 
\maketitle
 
\abstract*{}

\abstract{Recently, Balogh--Morris--Samotij and Saxton--Thomason proved that hypergraphs satisfying some natural conditions have only few independent sets. Their main results already have several applications. However, the methods of proving these theorems are even more far reaching. The general idea is to describe some family of events, whose cardinality a priori could be large, only with a few certificates. Here, we show some app\-lications of the methods, including counting  $C_4$-free graphs, considering the  size of a maximum $C_4$-free subgraph of a random graph and counting metric spaces with a given number of points. Additionally, we discuss some connections with the Szemer\'edi Regularity Lemma.}

\section{Introduction}
\label{sec:1}
Recently, an important trend in probabilistic combinatorics is to transfer extremal results in a dense environment into a sparse, random environment. The first main breakthrough is due to Conlon--Gowers~\cite{conlongowers} and Schacht~\cite{schacht}. Not much later, Balogh--Morris--Samotij~\cite{BMS} and Saxton--Thomason~\cite{saxthom} had a different approach, which not only proved most of the results of Conlon--Gowers~\cite{conlongowers} and Schacht~\cite{schacht}, but also provided counting versions of these results. Additionally, there are some further  app\-lications of the main theorems of~\cite{BMS}
 and~\cite{saxthom}.
However, the  proof methods have the potential to be more influential than the theorems themselves. The general idea, which can be traced back to the classical paper of Kleitman--Winston~\cite{kleitman} (and more explicitly in several papers of   Sapozhenko) is to describe some family of events, whose cardinality a priori could be large, only with a few certificates.

Here, we try to outline what lies behind this method. Because each of the above mentioned four papers~\cite{conlongowers,schacht,BMS,saxthom} already gave a nice survey of the field, additionally Conlon~\cite{conlon},  R\"{o}dl--Schacht~\cite{rodlschacht} and Samotij~\cite{samotij} have survey papers of the topics, we choose a different route. 
Each of the following four sections discusses the method with some applications.  Three of the sections contain new results.

In Section~\ref{sec:2} we state an important corollary of the main results of~\cite{BMS} and~\cite{saxthom} and will discuss its connection with the Szemer\'edi Regularity Lemma~\cite{szemeredi}.

In Section~\ref{sec:3} we estimate the 
volume of the convex subset of 
  $[0,1]^{\binom{n}{2}}$
  represen\-ting metric spaces with $n$ points, and consider a discrete variant of this problem as well. Kozma--Meyerovitch--Peled--Samotij~\cite{wojtek} considered the following question: choose randomly and independently $\binom{n}{2}$ numbers from the interval $[0,1]$, labelling the edges of a complete graph $K_n$,  what is the probability that any three of them forming a triangle will satisfy the triangle inequality? 
  They used entropy estimates to derive an upper bound on this probability  and noticed that Szemer\'edi Regularity Lemma can be used to count metric spaces whose all distances belong to a discrete set of a fixed size.

   Mubayi and Terry~\cite{mubayi} more recently pushed the discrete case into $\{0,1\}$-law type results.
Using the general results of~\cite{BMS} and~\cite{saxthom}, we improve the results implied by the regularity lemma, and obtain good results for the continuous case.
Parallel to our work, Kozma,  Meyerovitch, Morris, Peled and  Samotij~\cite{morrissamo} practically solved the continuous version of the problem using more advanced versions of the methods of \cite{BMS} and~\cite{saxthom}. In order to avoid  duplicate work, our goal in this section is clarity over pushing the method to its limit.

It was probably Babai--Simonovits--Spencer~\cite{babai} who first considered extremal problems in random graphs. Most of their proposed problems are resolved, but there was no progress on the following:
What is the maximum number of the edges of a $C_4$-free subgraph of the random graph $G(n,p)$ when
$p=1/2$? In the case $p=o(1)$, strong  bounds were given by Kohayakawa--Kreuter--Steger~\cite{kksteger}. Here,   in Section~\ref{sec:4}, we improve on the trivial upper bound, noting that an $n$-vertex $C_4$-free graph can have at most $(1/2+o(1))n^{3/2}$ edges.

\begin{svgraybox}
\begin{thm}\label{randomC_4}
For every $p\in(0,1)$, there is a $c>0$ that the  largest $C_4$-free subgraph of $G(n,p)$ has at most  $(1/2-c)n^{3/2}$ edges w.h.p. In particular, if $p=0.5$ we can take $c=0.028$.
\end{thm}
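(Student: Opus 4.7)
The natural approach is to apply the hypergraph container theorem to the $4$-uniform hypergraph $\mathcal{H}$ on vertex set $E(K_n)$ whose hyperedges are the edge sets of the copies of $C_4$ in $K_n$; independent sets of $\mathcal{H}$ are exactly the $C_4$-free graphs on $[n]$, so the goal reduces to bounding the maximum independent set of $\mathcal{H}$ that lies inside $E(G(n,p))$. Supersaturation for $C_4$ (any $n$-vertex graph with $(1/2+\varepsilon)n^{3/2}$ edges contains $\Omega_\varepsilon(n^4)$ copies of $C_4$) lets us invoke the Balogh--Morris--Samotij / Saxton--Thomason theorem to obtain a family $\mathcal{C}$ of subgraphs of $K_n$ such that (i) every $C_4$-free graph on $[n]$ is a subgraph of some $C\in\mathcal{C}$, (ii) $|C|\le (1/2+\varepsilon)n^{3/2}$ for every $C\in\mathcal{C}$, and (iii) $|\mathcal{C}|\le 2^{\alpha(\varepsilon)\,n^{3/2}}$ for an explicit constant $\alpha(\varepsilon)$ coming from the container iteration.

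Next I would pass to $G(n,p)$. If $H\subseteq G(n,p)$ is $C_4$-free, it lies in some container $C$, so $|H|\le |C\cap E(G(n,p))|$. For fixed $C$ this is a sum of $|C|$ independent $\mathrm{Bernoulli}(p)$ variables, so by Hoeffding's inequality
\[
\Pr\bigl[\,|C\cap E(G(n,p))|\ge p|C|+\delta\,n^{3/2}\,\bigr]\le \exp\Bigl(-\frac{2\delta^2}{1/2+\varepsilon}\,n^{3/2}\Bigr).
\]
A union bound over $\mathcal{C}$ shows that the event ``some container exceeds its expectation by $\delta n^{3/2}$'' has probability at most $\exp\bigl((\alpha(\varepsilon)\ln 2-\tfrac{2\delta^2}{1/2+\varepsilon})\,n^{3/2}\bigr)$, which is $o(1)$ provided $\delta>\sqrt{\alpha(\varepsilon)(1/2+\varepsilon)(\ln 2)/2}$. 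Hence w.h.p.\ every $C_4$-free $H\subseteq G(n,p)$ satisfies $|H|\le (p/2+p\varepsilon+\delta)\,n^{3/2}$, yielding $c=1/2-p/2-p\varepsilon-\delta>0$ after choosing $\varepsilon$ and $\delta$ appropriately small.

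For the explicit claim at $p=1/2$ one takes $\varepsilon$ small and $\delta$ just above the Hoeffding threshold, giving $c\approx 1/4-\delta-o(1)$. Reaching $c=0.028$ reduces to verifying that $\alpha(\varepsilon)$ is below some absolute constant (roughly $(1/4-0.028)^2\cdot 4/\ln 2\approx 0.285$), which is a routine numerical check once the container bound is sufficiently quantitative.

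The main obstacle is item (iii): extracting a small enough $\alpha(\varepsilon)$. A black-box invocation of the container theorem for $K_{2,2}$-free graphs may leave too much slack in the exponent for the sharp numerical value $c=0.028$. I expect that obtaining this specific constant requires running the container iteration directly for $C_4$ in the Kleitman--Winston style, tracking the codegree function of $\mathcal{H}$ so that the fingerprints stay short; once $\alpha(\varepsilon)$ is controlled, balancing $\varepsilon$ against $\delta$ is a brief computation.
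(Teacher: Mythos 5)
The proposal has a gap at its foundation: the supersaturation estimate is wrong by a factor of $n^2$. An $n$-vertex graph with $m=(1/2+\varepsilon)n^{3/2}$ edges has only $\Theta_\varepsilon(n^2)$ copies of $C_4$, not $\Omega_\varepsilon(n^4)$. (Quick sanity check: the number of $C_4$s is $\sum_{u,w}\binom{\mathrm{codeg}(u,w)}{2}$, and in a $\sqrt n$-regular graph on $n^{3/2}$ edges a typical pair has codegree $O(1)$; starting from the extremal polarity graph and adding $\varepsilon n^{3/2}$ edges creates only $O(\sqrt n)$ new $C_4$s per edge.) Since $K_n$ has $\Theta(n^4)$ copies of $C_4$, a single black-box application of the container lemma with $e(\mathcal{H}[C])\le\alpha e(\mathcal{H})$ is vacuous for $\alpha$ constant: a container on $\alpha n^4$ copies of $C_4$ can still have $\Theta(n^2)$ edges. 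To force containers down to $(1/2+\varepsilon)n^{3/2}$ edges one would need $\alpha=o(n^{-2})$, which destroys the fingerprint bound $\log|\mathcal{C}|\approx Np\log(1/p)$. You flag item (iii) as the main obstacle, but it is really item (ii) that fails: after one round of containers there is no statement of the form ``each $C$ has $(1/2+\varepsilon)n^{3/2}$ edges.'' Morris--Saxton fix this by \emph{iterating} with balanced supersaturation, but even there the containers come with a degree-square condition, not a sharp $(1/2+\varepsilon)n^{3/2}$ edge bound, and they run the iteration only in the range $m\gtrsim n^{4/3}\log^4 n$ aiming at the counting problem, not this one; nothing off the shelf yields your claimed family $\mathcal{C}$.

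The paper takes a genuinely different route that sidesteps this. The first proof never containerizes $C_4$-free graphs directly; it uses a min-degree split into $Y$ (first $\delta n$ vertices) and $X$, and exploits the structural fact that, because $H$ is $C_4$-free, the neighbourhood $N_H(v)\cap X$ of each $v\in Y$ is an \emph{independent set in the graph} $H^2[X]$ (the ``proper square''). A graph container lemma for $F^2$, where $F$ is a random sparse subgraph of $H[X]$, then produces containers for these neighbourhoods, and a degree-sequence argument (Lemma~\ref{manyedgesbetween}) forces $\Omega(n^{3/2})$ edges to be squeezed into $(1+o(1))$ as many slots; Chernoff plus a union bound over the $\exp(o(n^{3/2}))$ certificates finishes. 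The value $c=0.028$ for $p=1/2$ comes from the second, Kleitman--Winston style proof: right-degree containers of size $\approx (n-i)/d_i^*$ for each vertex $v_i$, optimizing $\frac{3-4\sqrt p}{6}$; it is not a container-count exponent $\alpha$, as in your sketch. So the numerical target you set for $\alpha(\varepsilon)$ is not what governs the constant.
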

\end{svgraybox}

Kleitman--Winston~\cite{kleitman} authored one of the first papers in the field whose main idea was to find small certificates of families of sets in order to prove that there are not many of them. They proved that the number of $C_4$-free graphs on $n$ vertices is at most $2^{cn^{3/2}}$ for $c\approx 1.081919$. 
In Section~\ref{sec:5}, we improve the constant in the exponent  by
using ideas found in Section~\ref{sec:3}. The improvement is tiny; the main aim is to demonstrate that the method has potential applications for other problems as well.

\section{Connections to Szemer\'{e}di's Regularity Lemma}
\label{sec:2}

In this section we describe some connections between the Szemer\'edi Regularity Lemma and the new counting method. Originally, without using the regularity lemma,  Erd{\H{o}}s--Kleitman--Rothschild~\cite{ekr} estimated the number of $K_k$-free graphs. 

\begin{svgraybox}
 \begin{thm}\label{ekr}
    There are  $2^{(1+o(1))\cdot\ex(n,K_k)}$ $K_k$-free graphs on $n$ vertices, where $\ex(n,K_k)$ is the maximum number of the edges of a $K_k$-free graph on $n$ vertices.
  \end{thm}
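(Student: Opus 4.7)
The lower bound is routine: the Tur\'an graph $T_{k-1}(n)$ has $\ex(n,K_k)$ edges and is $K_k$-free, so all $2^{\ex(n,K_k)}$ of its subgraphs are also $K_k$-free.

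For the upper bound I would use the Szemer\'edi Regularity Lemma, in keeping with the theme of this section. Fix constants $0<\epsilon\ll d\ll 1$. For each $K_k$-free graph $G$ on $n$ vertices, apply the Regularity Lemma to obtain an equitable partition $V_1,\ldots,V_m$ into $m=m(\epsilon)$ parts, and define the reduced graph $R$ on $[m]$ by letting $ij\in E(R)$ iff $(V_i,V_j)$ is $\epsilon$-regular with density at least $d$. By the Counting (or Embedding) Lemma, a $K_k$ in $R$ would force $\Omega_{d,k}((n/m)^k)$ copies of $K_k$ in $G$, contradicting $K_k$-freeness; hence $R$ is $K_k$-free and Tur\'an's theorem yields $|E(R)|\leq (1-\tfrac{1}{k-1})\binom{m}{2}$.

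Next, I count the $K_k$-free graphs compatible with a fixed (partition, reduced graph) pair. Partition the edge slots of $K_n$ into four classes: (i) slots inside some $V_i$ (at most $O(n^2/m)$ of them, contributing $2^{O(n^2/m)}$ choices); (ii) slots in non-regular pairs (at most $\epsilon\binom{m}{2}$ such pairs, contributing $2^{\epsilon n^2/2}$ choices in total); (iii) slots in regular pairs of density below $d$, where the number of edges is at most $d(n/m)^2$ per pair and the entropy bound yields $2^{H(d)(n/m)^2}$ choices per pair for a combined total of $2^{H(d)n^2/2}$; and (iv) slots in the ``good'' pairs $ij\in E(R)$, freely assigned and contributing at most $2^{(1-\tfrac{1}{k-1})(1+o(1))\binom{n}{2}}=2^{(1+o(1))\ex(n,K_k)}$ choices. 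Multiplying by the number of partitions ($\leq m^n=2^{o(n^2)}$) and of reduced graphs ($\leq 2^{\binom{m}{2}}=O_m(1)$) gives a total count at most $2^{(1+o(1))\ex(n,K_k)+(\epsilon+H(d))n^2/2}$. Taking $\epsilon,d\to 0$ after $n\to\infty$ absorbs the error into the $(1+o(1))$ exponent.

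The main technical obstacle is orchestrating the parameter hierarchy $1/n\ll 1/m\ll \epsilon\ll d\ll 1$ so that the Embedding Lemma applies cleanly to $R$ while the bad-pair and intra-part contributions stay negligible compared to $\ex(n,K_k)=\Theta(n^2)$. A more modern alternative would be to apply the container theorem of~\cite{BMS,saxthom} directly to the hypergraph whose edges are the copies of $K_k$ in $K_n$: one then obtains only $2^{o(\ex(n,K_k))}$ containers, each spanning at most $(1+o(1))\ex(n,K_k)$ edges of $K_n$, which immediately gives the count without an explicit partition-counting step.
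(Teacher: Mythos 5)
Your proposal is correct and follows essentially the same route as the paper's sketch, namely the Erd\H{o}s--Frankl--R\"odl argument via the Szemer\'edi Regularity Lemma (with the same decomposition into intra-cluster, irregular, sparse-regular, and dense-regular slots, and the same parameter hierarchy and counting). The only cosmetic difference is that the paper bounds the dense-regular contribution by observing that the blow-up $C_n$ of the reduced graph is itself $K_k$-free, so $e(C_n)\le\ex(n,K_k)$, whereas you invoke Tur\'an's theorem on the reduced graph---which is the same estimate.
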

\end{svgraybox}

Here, we sketch a well-known proof of Erd\H os--Frankl--R\"{o}dl~\cite{efr}, using the Szemer\'edi Regularity Lemma.  We apply the Regularity Lemma for a $K_k$-free graph  $G_n$, which outputs a cluster graph $R_t$, where $t=O(1)$. Then we clean $G_n$, i.e., we remove edges inside the clusters, between the sparse  and the irregular pairs.
Define  $C_n$ to be the blow-up of $R_t$ to $n$ vertices (getting back the same preimage vertices of $G_n$). Observe that
 $C_n$ contains all but $o(n^2)$  edges of   $G_n$ and
 $C_n$ is $K_k$-free, hence $e(C_n)\le \ex(n,K_k)$. Now we can count the number of choices for $G_n$:
The number of choices for $C_n$ is $O(1)\cdot n^n$, the
 number of choices for $E(G_n)\cap E(C_n)$ given $C_n$, is at most $2^{\ex(n,K_k)}$, and the number of choices  for $E(G_n) - E(C_n)$ given $C_n$, is $2^{o(n^2)}$, completing the proof of the result.
 
  An important corollary of the main results of Balogh--Morris--Samotij~\cite{BMS} and Saxton--Thomason~\cite{saxthom} is the following characterizaton   of $K_k$-free graphs.

\begin{svgraybox}
 \begin{prop}\label{Kkchar}
There is a  $t\le 2^{O(\log n\cdot n^{2-1/(k-1)})}$ and a set $\{G_1,\dots,G_t\}$ of graphs, each containing    $o(n^k)$ copies of $K_k$, such that for every $K_k$-free graph $H$ there is  an $i\in [t]$ such that $ H \subseteq  G_i$. 
 \end{prop}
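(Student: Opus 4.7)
The plan is to apply the hypergraph container theorem of Balogh--Morris--Samotij~\cite{BMS} and Saxton--Thomason~\cite{saxthom} to the auxiliary hypergraph $\mathcal{H}$ whose vertex set is $V(\mathcal{H}) = E(K_n)$ and whose hyperedges are the $\binom{n}{k}$ copies of $K_k$, each viewed as its $\binom{k}{2}$-tuple of edges. Under this identification, a graph on $[n]$ is $K_k$-free if and only if its edge set is an independent set of $\mathcal{H}$, and for any $C \subseteq V(\mathcal{H})$ the number of $K_k$'s in the graph whose edge set is $C$ is exactly the number of hyperedges of $\mathcal{H}$ lying inside $C$. This tautological dictionary is what turns the proposition into a pure container-theoretic statement.

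The first substantive step is to record the parameters of $\mathcal{H}$: $N := |V(\mathcal{H})| = \binom{n}{2}$, the hypergraph is regular with degree $d = \binom{n-2}{k-2} = \Theta(n^{k-2})$, and for $2 \le \ell \le \binom{k}{2}$ the co-degree $\Delta_\ell(\mathcal{H})$ is at most $\binom{n-v_\ell}{k-v_\ell}$, where $v_\ell$ is the minimum number of vertices that can span $\ell$ distinct edges. I would then set $\tau := c\, n^{-1/(k-1)}$ for a sufficiently small $c = c(k) > 0$ and verify the container condition
\[
\Delta_\ell(\mathcal{H}) \;\le\; c'\, \tau^{\ell-1} d \qquad \text{for every } 2 \le \ell \le \tbinom{k}{2},
\]
with $c'$ arbitrarily small once $c$ is. The extreme cases $\ell = 2$ (where $v_2 = 3$) and $\ell = \binom{k}{2}$ (where $v_\ell = k$ and $\Delta_\ell = 1$) both pin down $\tau$ at the same order $n^{-1/(k-1)}$, while intermediate $\ell$ are easier by interpolation.

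Once the co-degree condition is in place, the container theorem supplies a family $\mathcal{F}$ of subsets of $V(\mathcal{H})$ such that (i) every independent set of $\mathcal{H}$ is contained in some $C \in \mathcal{F}$, (ii) $e(\mathcal{H}[C]) = o(e(\mathcal{H})) = o(n^k)$ for every $C \in \mathcal{F}$, and (iii)
\[
|\mathcal{F}| \;\le\; \binom{N}{\le N\tau} \;\le\; 2^{O(N \tau \log(1/\tau))} \;=\; 2^{O(\log n \cdot n^{2-1/(k-1)})}.
\]
Interpreting each $C \in \mathcal{F}$ as the graph $G_C$ on $[n]$ whose edge set is $C$, property (ii) says that $G_C$ contains $o(n^k)$ copies of $K_k$, while (i) says that every $K_k$-free graph on $[n]$ is a subgraph of some $G_C$. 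Setting $\{G_1,\dots,G_t\} := \{G_C : C \in \mathcal{F}\}$ then yields the proposition.

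The only place where actual work is needed is the verification of the co-degree bounds; this is precisely where the exponent $2 - 1/(k-1)$ is born, and it is the reason $\tau$ must be taken of order $n^{-1/(k-1)}$. Everything else--the translation between $K_k$-free graphs and independent sets of $\mathcal{H}$, and the standard estimate of $\binom{N}{\le N\tau}$--is routine bookkeeping.
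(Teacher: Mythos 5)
Your high-level strategy is the standard and correct one, and indeed the paper gives no proof of this proposition at all—it is simply quoted as a corollary of Balogh--Morris--Samotij and Saxton--Thomason, so you are supplying the missing derivation. However, your codegree analysis contains a concrete error. For $\ell = 2$ (two edges meeting at a vertex, $v_2 = 3$) the bound $\Delta_2 = \Theta(n^{k-3})$ against $\bar d = \Theta(n^{k-2})$ gives the constraint $\tau \gtrsim n^{-1}$, and for $\ell = \binom{k}{2}$ the bound $\Delta_\ell = 1$ gives $\tau \gtrsim n^{-(k-2)/(\binom{k}{2}-1)} = n^{-2/(k+1)}$. Neither of these is $n^{-1/(k-1)}$, and they coincide with it only when $k = 3$. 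The exponent $2/(k+1)$ is exactly $1/m_2(K_k)$, so the natural output of the container theorem here is the \emph{stronger} bound $t \le 2^{O(n^{2-2/(k+1)}\log n)}$; the exponent $2 - 1/(k-1)$ in the statement is simply a weaker (still valid) bound, and your argument should not pretend the codegree calculation ``pins it down.''

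There is a second, more structural gap: a single application of the container lemma at threshold $\tau$ does not directly produce containers with $o(n^k)$ copies of $K_k$. In the codegree formulation the condition is $\Delta_\ell \lesssim \alpha\, \bar d\, \tau^{\ell-1}$, so demanding $\alpha = o(1)$ pushes the required $\tau$ above the naive value, and in iteration-based formulations the fingerprint accumulates over $\Theta(\log(1/\alpha))$ rounds. Your bound $\binom{N}{\le N\tau}$ silently assumes the whole fingerprint has size $O(N\tau)$, which is true in the black-box Saxton--Thomason statement but is not what you derived. The cleanest fix is to cite their $H$-free container theorem directly, which already packages the iteration and yields fingerprints of size $O(n^{2-1/m_2(K_k)})$; this both gives the stated proposition and explains why the exponent you should see is $2/(k+1)$, not $1/(k-1)$.
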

\end{svgraybox}

Note that it follows from standard results in extremal graph theory that each such graph $G_i$ necessarily contains at most  $(1+o(1))\cdot\ex(n,K_k)$      edges.

This provides an even shorter proof of Theorem~\ref{ekr}: For each $K_k$-free graph $H$ there is  an $i\in [t]$ such that $ H \subseteq  G_i$. The number of choices for $i$ is $ 2^{O(\log n\cdot n^{2-1/(k-1)})}$, and 
the number of subgraphs of $G_i$ is at most $2^{(1+o(1))\ex(n,K_k)}$.

The proof using the Regularity Lemma yields two variants of Proposition~\ref{Kkchar}, one which is weaker,  as it gives $t=2^{o(n^2)}$, though it is strong enough for this particular application, and one
 that we call the Szemer\'edi Approximate Container Lemma.

\begin{svgraybox}
 \begin{prop}\label{szcont}
There is a  $t= 2^{o(n^{2})}$ and a set $\{G_1,\dots,G_t\}$ of graphs, each containing   at most $(1+o(1))\ex(n,K_k)$ edges, and $o(n^k)$ copies of $K_k$, such that for every $K_k$-free graph $H$ there is  an $i\in [t]$ such that $ H \subseteq  G_i$. 
\end{prop}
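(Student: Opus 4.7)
The plan is to extract containers directly from the Szemer\'edi Regularity Lemma, following the Erd\H os--Frankl--R\"odl strategy sketched above for Theorem~\ref{ekr} but recording the ``regularity certificate'' of each $K_k$-free graph as a succinct template. Fix a parameter $\varepsilon = \varepsilon(n) \to 0$ tending to $0$ slowly (to be tuned at the end). For each $K_k$-free graph $H$ on $[n]$, apply the Regularity Lemma to obtain an $\varepsilon$-regular partition $\pi = (V_1, \dots, V_t)$, where $t = t(\varepsilon)$ is bounded by a tower in $1/\varepsilon$ but independent of $n$. Let $R$ be the reduced graph on $[t]$, with an edge $ij$ iff $(V_i, V_j)$ is $\varepsilon$-regular of density at least $\varepsilon$. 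The counting (or removal) lemma guarantees that any $K_k$ in $R$ would produce $\Omega(n^k)$ copies of $K_k$ in $H$; hence $R$ must be $K_k$-free. Let $C(\pi, R)$ denote the blow-up of $R$ along $\pi$; since $R$ is $K_k$-free, so is $C(\pi, R)$, and thus $|E(C(\pi, R))| \le \ex(n, K_k)$. The standard cleaning estimate shows that $E' := E(H) \setminus E(C(\pi, R))$ has size at most $\delta(\varepsilon) n^2$, where $\delta(\varepsilon) \to 0$ as $\varepsilon \to 0$.

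Declare the container family to consist of the graphs $G_{\pi, R, E'} := C(\pi, R) \cup E'$, ranging over ordered partitions $\pi$ of $[n]$ into $t$ parts, over $K_k$-free graphs $R$ on $[t]$, and over edge sets $E' \subseteq \binom{[n]}{2}$ with $|E'| \le \delta n^2$. By construction, every $K_k$-free $H$ is contained in some $G_{\pi, R, E'}$. The number of templates is at most
\[
t^n \cdot 2^{\binom{t}{2}} \cdot \binom{\binom{n}{2}}{\delta n^2} \;\le\; 2^{\,n \log t \,+\, O(1) \,+\, O(\delta \log(1/\delta))\, n^2},
\]
which is $2^{o(n^2)}$ provided $\log t = o(n)$ and $\delta \log(1/\delta) \to 0$. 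Both conditions are secured by choosing $\varepsilon(n) \to 0$ slowly enough: since $t$ depends only on $\varepsilon$ and $\delta = \delta(\varepsilon) \to 0$, a diagonal choice works.

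Finally, each container satisfies $|E(G_{\pi, R, E'})| \le \ex(n, K_k) + \delta n^2 = (1 + o(1))\ex(n, K_k)$, using $\ex(n, K_k) = \Theta(n^2)$ from Tur\'an's theorem. Moreover, $C(\pi, R)$ is $K_k$-free, while each edge of $E'$ participates in at most $\binom{n-2}{k-2} = O(n^{k-2})$ copies of $K_k$ in $G_{\pi, R, E'}$; hence $G_{\pi, R, E'}$ contains at most $\delta n^2 \cdot O(n^{k-2}) = o(n^k)$ copies of $K_k$, as required. The one genuinely delicate point is the diagonalization: one must balance $\varepsilon$, the tower-type bound $t(\varepsilon)$, and the cleaning error $\delta(\varepsilon)$ so that all error terms vanish simultaneously. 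This is the main technical obstacle, but a standard one, and once it is resolved the rest of the argument is essentially Erd\H os--Frankl--R\"odl recast as a container statement.
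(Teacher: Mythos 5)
Your argument is correct and is essentially the paper's own: the Erd\H os--Frankl--R\"odl regularity-and-cleaning argument sketched just before the proposition, recast as a container statement with the triple $(\pi,R,E')$ serving as the certificate. The only cosmetic points are that the reduced-graph density threshold should be a separate parameter $d$ with $\varepsilon\ll d$ (so the counting lemma actually applies) and that the $2^{\binom{t}{2}}$ factor is $2^{O(t^2)}$ rather than $2^{O(1)}$ once $\varepsilon=\varepsilon(n)\to 0$, but both are absorbed by the slow diagonal choice of $\varepsilon(n)$ you already invoke.
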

\end{svgraybox}

\begin{svgraybox}
 \begin{prop}\label{appszcont}
There is a  $t = O(1)$ and a set $\{G_1,\dots,G_t\}$ of graphs, each $K_k$-free, such that for every $K_k$-free graph $H$ there is  an $i\in [t]$ and a permutation of $V(H)$, giving an isomorphic graph $H'$, such that  $|  E(H')-E(G_i)|=o(n^2)$. 
\end{prop}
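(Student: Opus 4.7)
The plan is to repackage the Szemer\'edi-regularity based proof of Theorem~\ref{ekr} (sketched earlier in this section) into a bounded family of containers, exploiting the fact that the cluster graph of any $K_k$-free graph has only a bounded number of vertices, so only $O(1)$ cluster graphs arise up to isomorphism. The canonical blow-ups of these cluster graphs to $n$ vertices will constitute $\{G_1, \dots, G_t\}$.

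Fix the target error $\varepsilon > 0$. Choose $0 < \varepsilon_1, \delta \ll \varepsilon$ (with $\delta$ also small in terms of $k$ so that the Counting/Key Lemma applies) and a large integer $T_0$ with $1/T_0 \ll \varepsilon$; let $T_1 = T_1(\varepsilon_1, T_0)$ be the upper bound from the Szemer\'edi Regularity Lemma. Given a $K_k$-free $H$ on $n$ vertices, apply the Regularity Lemma to obtain an $\varepsilon_1$-regular partition $V(H) = V_0 \cup V_1 \cup \cdots \cup V_s$ with $T_0 \le s \le T_1$, $|V_0| \le \varepsilon_1 n$, and $|V_1| = \cdots = |V_s| = m$. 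Form the cluster graph $R$ on $[s]$ by declaring $ij$ an edge whenever $(V_i, V_j)$ is $\varepsilon_1$-regular of density at least $\delta$; since $H$ is $K_k$-free, the Counting Lemma forces $R$ to be $K_k$-free, for otherwise a $K_k$ in $R$ would produce many copies of $K_k$ in $H$. Assemble the family once and for all: for every $s \in [T_0, T_1]$ and every $K_k$-free graph $R_0$ on $[s]$, let $G_{R_0, s}$ be the canonical blow-up obtained by partitioning $[n]$ into consecutive blocks $W_1, \ldots, W_s$ of nearly equal size and joining $u \in W_i$ to $v \in W_j$ iff $ij \in E(R_0)$. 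Each $G_{R_0, s}$ is $K_k$-free (blow-ups preserve this), and there are $\sum_{s = T_0}^{T_1} 2^{\binom{s}{2}} = O_\varepsilon(1)$ of them.

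To match $H$ with $G_{R, s}$, permute $V(H)$ so that for each $i \ge 1$ the cluster $V_i$ occupies the first $m$ positions of $W_i$ and $V_0$ fills the remaining slack; call the relabeled graph $H'$. An edge of $H'$ not in $G_{R, s}$ sits either inside some $W_i$ or across a non-edge of $R$, and the standard cleaning estimates bound them: internal edges contribute at most $s\binom{m}{2} \le n^2/(2T_0)$; irregular pairs contribute at most $\varepsilon_1 \binom{s}{2} m^2 \le \varepsilon_1 n^2/2$; pairs of density below $\delta$ contribute at most $\delta n^2/2$; and edges incident to $V_0$ contribute at most $|V_0|\, n \le \varepsilon_1 n^2$. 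Choosing $\varepsilon_1, \delta$ small and $T_0$ large gives $|E(H') \setminus E(G_{R, s})| \le \varepsilon n^2$, as required. The only step requiring genuine care is invoking the Counting Lemma to guarantee $K_k$-freeness of $R$; once parameters are set, the remainder is routine bookkeeping essentially identical to the Erd\H os--Frankl--R\"odl argument recalled above.
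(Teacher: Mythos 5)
Your proposal is correct and follows essentially the same route the paper intends: it repackages the Erd\H os--Frankl--R\"odl regularity-lemma argument sketched just above the proposition, by fixing the family of canonical blow-ups of all $K_k$-free cluster graphs in advance and permuting $V(H)$ to align the regular partition with those blocks. The parameter bookkeeping (internal, irregular, sparse, and exceptional-set contributions) and the invocation of the Counting Lemma to get $K_k$-freeness of the reduced graph are exactly the standard details the paper leaves implicit.
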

\end{svgraybox}

Most of the tools from this section are likely to be useful in counting {\it maximal} $K_r$-free graphs, where there are still no satisfactory bounds when $r\ge 4$.

\begin{svgraybox}
\begin{problems}
What is the number of {\bf maximal} $K_r$-free graphs with vertex set $[n]$? For $r=3$ this was a question of Erd\H os, which was settled in \cite{sarka1} and \cite{sarka2}.
\end{problems}
\end{svgraybox}
\section{The number of metric spaces}
\label{sec:3}

Our goal is to estimate the number of metric spaces on $n$ points, where the distance between any two points lies in $\{1,\ldots, r\}$ for some $r=r(n)$. This problem was considered first  by
Kozma--Meyerovitch--Peled--Samotij~\cite{wojtek}, who, using the regularity lemma gave an asymptotic bound on the number of   such metric spaces for a fixed constant $r$. Recently,
  Mubayi--Terry~\cite{mubayi} provided a characterisation of the typical structure of such metric spaces for a fixed constant $r$, while $n\to \infty$. We will be more interested in what happens if $r$ is allowed to grow as a function of $n$. Our main result is the following:

\begin{svgraybox}
\begin{thm}\label{wojtekmain} Fix an arbitrary small constant $\epsilon>0$. If $$r=O\left(\frac{n^{1/3}}{\log^{\frac{4}{3} + \epsilon}n}\right),$$ then the number of such metric spaces is $$\bigg\lceil\frac{r+1}{2}\bigg\rceil^{\binom{n}{2}+o(n^2)}.$$
\end{thm}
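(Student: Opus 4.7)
The plan is as follows. The lower bound is immediate: any function $f\colon E(K_n)\to\{\lceil(r+1)/2\rceil,\ldots,r\}$ defines a metric, because in any triangle the two smallest labels sum to at least $2\lceil(r+1)/2\rceil\ge r+1$, strictly exceeding the largest; this yields $\lceil(r+1)/2\rceil^{\binom{n}{2}}$ metric spaces. For the upper bound I would apply the hypergraph container method to the auxiliary $3$-uniform hypergraph $H$ with vertex set $V(H)=\binom{[n]}{2}\times[r]$ and hyperedges
$$\bigl\{(e_1,d_1),(e_2,d_2),(e_3,d_3)\bigr\}$$
where $e_1,e_2,e_3$ span a triangle in $K_n$ and $(d_1,d_2,d_3)$ violates the triangle inequality. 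Under this encoding, metric spaces correspond exactly to transversal independent sets of $H$.

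Direct counts give $|V(H)|=\Theta(n^2r)$, $e(H)=\Theta(n^3r^3)$, $\Delta_1(H)=\Theta(nr^2)$, $\Delta_2(H)=O(r)$ and $\Delta_3(H)=1$; taking $\tau$ of order $1/(r\sqrt{n})$ satisfies the co-degree hypotheses $\Delta_j(H)\le c\,d(H)\tau^{j-1}$ of the container theorem of \cite{BMS,saxthom}. Iterating it would produce a family $\mathcal C$ of containers such that every metric space lies in some $C\in\mathcal C$, with $\log_2|\mathcal C|=O(\tau|V(H)|\log|V(H)|)=O(n^{3/2}\log n)$ and each $C$ spanning only $o(e(H))$ hyperedges. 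For each $C$ set $L_C(e)=\{d:(e,d)\in C\}$; the number of metric spaces supported on $C$ is at most $\prod_e|L_C(e)|$. The key extremal input, which I would prove by a short case analysis on the minima $m_i=\min L_i$ using the forced inequalities $\max L_i\le m_j+m_k$, is that any triple of lists $L_1,L_2,L_3\subseteq[r]$ containing no violating triple satisfies $|L_1|+|L_2|+|L_3|\le 3r/2+O(1)$. Summing over all $\binom{n}{3}$ triangles yields $\sum_e|L_C(e)|\le (r/2+O(1))\binom{n}{2}$ whenever $C$ is strictly $H$-independent, and a supersaturation version, in the spirit of \cite{wojtek}, would degrade this to $(r/2+o(r))\binom{n}{2}$ under the $o(e(H))$ hypothesis. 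AM--GM then bounds $\prod_e|L_C(e)|\le\lceil(r+1)/2\rceil^{\binom{n}{2}+o(n^2)}$, and multiplying by $|\mathcal C|$, while invoking $r=O(n^{1/3}/\log^{4/3+\epsilon}n)$ to absorb the container count into the exponent, finishes the proof.

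The most delicate step is the quantitative supersaturation claim. To carry it out I would argue that any triangle $T$ whose list-sum overshoots $3r/2+O(1)$ by $\eta r$ must contribute $\Omega(\eta r^3)$ violating label-triples, so that the global bound of $o(e(H))$ bad triples, combined with a Cauchy--Schwarz averaging, limits the fraction of overshooting triangles to $o(1)$ and hence bounds the overall list-total. Reconciling the polynomial rate of this supersaturation with the container parameter $\tau=1/(r\sqrt{n})$ and the target error $o(n^2\log r)$ in the exponent is precisely what dictates the exponent $4/3+\epsilon$ in the hypothesis on $r$.
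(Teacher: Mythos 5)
Your strategy matches the paper's: same auxiliary $3$-uniform hypergraph on $\binom{[n]}{2}\times[r]$ with non-metric triangles as hyperedges, same extremal input (the $|L_1|+|L_2|+|L_3|\le 3m(r)+O(1)$ bound from Mubayi--Terry), and the same container~+~supersaturation~+~AM--GM scaffolding. However, there is a genuine gap in the parameter choice. With $\tau=1/(r\sqrt{n})$ and the computed co-degrees $\bar d=\Theta(nr^2)$, $\Delta_3=1$, one gets
\[
\frac{2\Delta_3}{\bar d\,\tau^2}=\Theta\!\left(\frac{1}{nr^2}\cdot r^2 n\right)=\Theta(1),
\]
so in the Mousset--Nenadov--Steger formulation used in the paper (Theorem~\ref{container}) the required inequality $\Delta(\mathcal H,p)\le\alpha/(27c)$ forces $\alpha=\Omega(1)$, which is useless: the supersaturation step needs $e(\mathcal H[C])=o(n^3)$, i.e.\ $\alpha=o(1/r^3)$, since $e(\mathcal H)=\Theta(n^3r^3)$. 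In fact $\alpha\ge 27c>1$ with a positive-integer $c$, so the theorem literally does not apply. (The observation that $\Delta_j\le c\,\bar d\,\tau^{j-1}$ formally holds is not enough; the implicit constant has to be small compared to the target edge-fraction~$\alpha$, and for the $\Delta_3$ inequality it is of order $1$ rather than $o(1/r^3)$.) The paper takes the much larger $p=1/(r\log^{2+\delta}n)$, pairs it with $\alpha=\Theta(\log^{4+2\delta}n/n)$, verifies the hypothesis, and then the requirement $\alpha e(\mathcal H)=o(n^3)$ is exactly what produces the constraint $r=o\bigl(n^{1/3}/\log^{(4+2\delta)/3}n\bigr)$; this, not any averaging over overshoot rates, is the source of the exponent $4/3+\epsilon$. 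Consequently your claimed bound $\log|\mathcal C|=O(n^{3/2}\log n)$ is not achievable by this method; the true bound is $O(n^2/\log^{\delta}n)$, still $o(n^2)$ but of a different order.

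A secondary issue: the supersaturation claim that an overshoot of $\eta r$ in a single triangle forces $\Omega(\eta r^3)$ violating label-triples is too optimistic. Taking $L_1=L_2=[m(r),r]$ and $L_3=[m(r)-\eta r,\,r]$, the excess is $\eta r$ but the number of violating triples is only $\Theta(\eta^3 r^3)$. The paper sidesteps the issue entirely by counting \emph{triangles} whose list-sum exceeds $3m(r)$ (each contributes at least one hyperedge by Lemma~\ref{localcrit}), rather than counting violating label-triples per triangle; this is cleaner and is precisely consistent with the $\alpha=o(1/r^3)$ requirement noted above. Note also that for odd $r$ the extremal bound is $3m(r)+1$, not $3m(r)$, and the paper needs a small extra argument (Füredi's lemma, Lemma~\ref{furedilemma}; and Lemma~\ref{maxindep}) to run the averaging on subsets of bounded size; your sketch would have to supply something similar.
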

\end{svgraybox}

Kozma--Meyerovitch--Peled--Samotij~\cite{wojtek} pointed out that the discrete and the continuous problems are related. They considered the same question in the continuous case with distances in $[0,1]$.
Their entropy based approach yields Theorem~\ref{wojtekmain} for $r<n^{1/8}.$
 
   At the end of this section we show how our results translate to the continuous setting.
 
For a positive integer  $r$  define $m(r)=\lceil\frac{r+1}{2}\rceil$. We will use an easy corollary of Mubayi--Terry~(\cite{mubayi} Lemma 4.9):

\begin{lem}\label{localcrit}
Let $A,B,C\subset [r]$, all non-empty. Suppose the triple $\{A,B,C\}$ does not contain a non-metric triangle -- that is, every triple $\{a,b,c : a\in A, b\in B, c\in C\}$ satisfies the triangle-inequality. Then if $r$ is even we have $|A|+|B|+|C|\leq 3m(r)$, and if $r$ is odd we have $|A|+|B|+|C|\leq 3m(r)+1$.
\end{lem}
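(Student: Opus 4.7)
The plan is to reduce the hypothesis on all triples to a condition on the extremal values of $A$, $B$, $C$, then pair it with the trivial bound $|X| \le \max X - \min X + 1$. Write $a_- = \min A$, $a_+ = \max A$, and analogously for $B$ and $C$. First I would observe that the worst case for the inequality $a + b \ge c$ over $a \in A$, $b \in B$, $c \in C$ occurs when $a, b$ are smallest and $c$ is largest; the hypothesis is therefore equivalent to the three inequalities
\begin{equation*}
a_+ \le b_- + c_-, \qquad b_+ \le a_- + c_-, \qquad c_+ \le a_- + b_-.
\end{equation*}

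Next, since each of $A, B, C$ is contained in an interval of integers, we have
\begin{equation*}
|A|+|B|+|C| \le (a_+ + b_+ + c_+) - (a_- + b_- + c_-) + 3,
\end{equation*}
call this (1). Summing the three displayed inequalities yields $a_+ + b_+ + c_+ \le 2(a_- + b_- + c_-)$; substituting into (1) gives
\begin{equation*}
|A|+|B|+|C| \le a_- + b_- + c_- + 3,
\end{equation*}
call this (2). Averaging (1) and (2) cancels the $a_- + b_- + c_-$ contribution, and since $a_+, b_+, c_+ \le r$ we arrive at $|A|+|B|+|C| \le \tfrac{3r}{2} + 3$.

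To finish, I would split on the parity of $r$. If $r$ is even then $m(r) = r/2 + 1$, so the bound reads exactly $3m(r)$, as required. If $r$ is odd then $\tfrac{3r}{2} + 3$ is a half-integer; integrality of the left side allows rounding down to $|A|+|B|+|C| \le (3r+5)/2 = 3m(r)+1$. The only step that needs a bit of care is the worst-case reduction at the beginning: there are three triangle-inequality directions, and one must check that each of them really does reduce to a single comparison between one maximum and a sum of two minimums. After that the whole argument is linear arithmetic and a parity observation.
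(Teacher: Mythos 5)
Your proof is correct, and it takes a cleaner route than the paper's own (which the authors actually suppress in the final text, citing Mubayi--Terry instead; a proof survives commented out in the source). The paper's version first reduces without loss of generality to the case that $A$, $B$, $C$ are intervals, then runs a sequence of tightening moves: decrease $a_1$ until one of the constraints $a_1+b_1\geq c_2$, $a_1+c_1\geq b_2$ becomes an equality, break symmetry, decrease $c_1$ similarly, then increase $a_2$ to $\min\{b_1+c_1, r\}$, and finally split into two cases depending on whether that minimum is attained at $b_1+c_1$ or at $r$. You bypass the interval reduction and the tightening bookkeeping entirely. Your observation that the hypothesis is equivalent to the three extremal inequalities $a_+\le b_-+c_-$, $b_+\le a_-+c_-$, $c_+\le a_-+b_-$ is exactly right (a universally quantified inequality over $A\times B\times C$ with monotone dependence on each coordinate can be checked at the endpoints, separately for each of the three directions), and once that is in place the averaging of (1) and (2) is a two-line calculation that lands on $\frac{1}{2}(a_++b_++c_+)+3\le\frac{3r}{2}+3$ with no case analysis. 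The endgame is the same in both arguments: note that $\frac{3r}{2}+3$ equals $3m(r)$ for even $r$, and for odd $r$ the half-integer $\frac{3r+6}{2}$ rounds down by integrality to $\frac{3r+5}{2}=3m(r)+1$. What your approach buys is brevity and transparency --- the linear structure is exposed instead of being hidden inside a tightening process --- and it also sidesteps the slightly informal ``wlog intervals'' step, since the trivial bound $|X|\le x_+-x_-+1$ already does that work.
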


Let $\mathcal{H}$ be the $3$-uniform hypergraph with vertex set $r$ rows, one for each color,  and $\binom{n}{2}$ columns, one for each edge of $K_n$. A vertex $(i,f)$ of $\mathcal{H}$  cor\-res\-ponds to the event that the graph edge $f$ has color $i$. Three vertices of $\mathcal{H}$ form a hyperedge when the graph edge coordinates of the vertices form a triangle in $K_n$ while the `colors' do not satisfy the triangle inequality. With other words, 
the hyperedges  correspond to non-metric triangles, and independent sets
having exactly one vertex from each column correspond to points of the metric polytope.
  Our plan is to prove a supersaturation statement, but first we need two lemmas.

The first lemma we use is due to F\"{u}redi \cite{furedi}. For a graph $G$, write $G^2$ for the ``proper square'' of $G$, i.e., where $xy$ is an edge if and only if there is a $z$ such that $xz$ and $zy$ are edges in $G$. Write $e(G)$ for the number of edges in $G$.

\begin{lem}\label{furedilemma}
For any graph $G$ with $n$ vertices, we have $$e(G^2)\geq e(G)-\lfloor n/2 \rfloor.$$
\end{lem}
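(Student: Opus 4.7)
The plan is to give a short double-counting proof that avoids any induction or case analysis.

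\emph{First}, for every vertex $v$ with $d_G(v) \geq 1$, I set $m_v := \max_{u \in N_G(v)} d_G(u)$, and establish the per-vertex lower bound
$$d_{G^2}(v) \;\geq\; m_v - 1.$$
This is almost immediate: fixing $u_0 \in N_G(v)$ that achieves the maximum, every vertex of $N_G(u_0) \setminus \{v\}$ is adjacent to $v$ in $G^2$ via the common $G$-neighbour $u_0$, so $d_{G^2}(v) \geq d_G(u_0) - 1 = m_v - 1$.

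\emph{Second}, I would lower-bound $\sum_v m_v$ by a friendship-paradox-style argument: since a maximum is at least an average, $m_v \geq \frac{1}{d_v}\sum_{u\in N(v)} d_u$ for every non-isolated $v$. Summing this over all non-isolated $v$ and regrouping by edge gives
$$\sum_{v:\,d_v\geq 1} m_v \;\geq\; \sum_{uv\in E(G)}\left(\frac{d_u}{d_v}+\frac{d_v}{d_u}\right) \;\geq\; 2e(G),$$
using $x+x^{-1}\geq 2$ for $x>0$ on each summand.

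\emph{Third}, I assemble the two bounds. Isolated vertices contribute $0$ on both sides, so
$$2e(G^2) \;=\; \sum_v d_{G^2}(v) \;\geq\; \sum_{v:\,d_v\geq 1}(m_v - 1) \;\geq\; 2e(G) - n,$$
giving $e(G^2) \geq e(G) - n/2$. Since $e(G^2)$ and $e(G)-\lfloor n/2\rfloor$ are integers, this immediately upgrades to $e(G^2)\geq e(G)-\lfloor n/2\rfloor$, as required. The only non-mechanical step is spotting the friendship-paradox-style inequality in the second paragraph; once that is in place, everything is bookkeeping.
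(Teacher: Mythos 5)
Your proof is correct. Let me verify the key steps. The per-vertex bound $d_{G^2}(v)\geq m_v-1$ is sound: if $u_0\in N_G(v)$ attains $d_G(u_0)=m_v$, then every $w\in N_G(u_0)\setminus\{v\}$ satisfies $vw\in E(G^2)$ via the common neighbour $u_0$, and $|N_G(u_0)\setminus\{v\}|=m_v-1$ because $v\in N_G(u_0)$. The regrouping $\sum_{v:\,d_v\geq 1}\frac{1}{d_v}\sum_{u\in N(v)}d_u=\sum_{uv\in E(G)}\bigl(\frac{d_u}{d_v}+\frac{d_v}{d_u}\bigr)$ is a standard edge double count, $x+x^{-1}\geq 2$ finishes step two, and the integrality upgrade from $e(G)-n/2$ to $e(G)-\lfloor n/2\rfloor$ is immediate since both $e(G^2)$ and $e(G)-\lfloor n/2\rfloor$ are integers. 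The bound is tight for a perfect matching, and your chain of inequalities is tight there as well, which is a good sanity check.

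The paper itself does not prove this lemma; it simply cites F\"uredi's 1992 paper on minimal diameter-2 graphs, so there is no in-paper argument to compare against. Your friendship-paradox double count is noticeably more global than what one might expect from a local/inductive argument of the kind that tends to appear in that setting, and it has the advantage of being completely case-free and two lines of algebra once the per-vertex bound is stated. It would fit well as a self-contained remark in the paper.
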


The second lemma bounds the size of the largest independent set in $\mathcal{H}$. 

\begin{lem}\label{maxindep}
Let $S\subset V(\mathcal{H})$ have no empty columns and contain no edges in $\mathcal{H}$. Then if $r$ is even we have $|S|\leq m(r)\binom{n}{2}$, and if $r$ is odd we have $|S|\leq m(r)\binom{n}{2}+rn$.
\end{lem}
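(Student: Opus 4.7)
The plan is to write $|S| = \sum_f |S_f|$, where $S_f = \{i \in [r] : (i,f) \in S\}$ is the color-set attached to edge $f$ of $K_n$. Summing Lemma~\ref{localcrit} over all $\binom{n}{3}$ triangles of $K_n$, and using that each edge lies in $n-2$ triangles, yields $(n-2) \sum_f |S_f| \leq 3m(r) \binom{n}{3}$ in the even case, which rearranges to $\sum_f |S_f| \leq m(r) \binom{n}{2}$, handling the even case directly without F\"uredi's lemma.

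In the odd case the analogous averaging using the local bound $3m(r)+1$ yields only $\sum_f |S_f| \leq m(r) \binom{n}{2} + \binom{n}{2}/3$, which is too weak. I would sharpen this by stratifying edges according to how far $|S_f|$ departs from $m(r)$. Set $c(f) = \max(|S_f|-m(r),0)$, $d(f) = \max(m(r)-|S_f|,0)$, and for $j\geq 1$ let $G_j = \{f:|S_f|\geq m(r)+j\}$ and $D_j = \{f:d(f)\geq j\}$. Then $\sum_f |S_f| - m(r) \binom{n}{2} = \sum_f c(f) - \sum_f d(f)$, and the standard layer-cake identities give $\sum_f c(f) = \sum_{j\geq 1}|G_j|$ and $\sum_f d(f) = \sum_{j\geq 1}|D_j|$.

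Two structural observations then drive the argument. First, each $G_j$ with $j\geq 1$ is triangle-free: three edges of a triangle in $G_j$ would sum to at least $3m(r)+3j > 3m(r)+1$, contradicting Lemma~\ref{localcrit}. Second, if $xy, yz \in G_j$, Lemma~\ref{localcrit} forces $|S_{xz}| \leq 3m(r)+1-2(m(r)+j) = m(r)+1-2j$, so $d(xz)\geq 2j-1$, giving the containment $G_j^2 \subseteq D_{2j-1}$. Applying F\"uredi's Lemma~\ref{furedilemma} to the triangle-free graph $G_j$ yields $|D_{2j-1}| \geq |G_j^2| \geq |G_j| - \lfloor n/2\rfloor$. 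Since $|S_f|\leq r$ forces $G_j = \emptyset$ for $j > (r-1)/2$, summing over $j=1,\ldots,(r-1)/2$ and using $\sum_f d(f) \geq \sum_{j\geq 1} |D_{2j-1}|$ (keeping only odd indices, all terms non-negative) gives
$$\sum_f d(f) \;\geq\; \sum_{j=1}^{(r-1)/2} \bigl(|G_j| - \lfloor n/2\rfloor\bigr) \;\geq\; \sum_f c(f) - \frac{(r-1)n}{4},$$
so $\sum_f |S_f| \leq m(r) \binom{n}{2} + (r-1)n/4 \leq m(r) \binom{n}{2} + rn$, as desired.

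The main conceptual step is recognising that a single-level application of F\"uredi's lemma (on $G_1$ alone) only suffices for $r=3$, because for larger odd $r$ the resulting coefficient $(r-3)/2$ on $|G_1|$ is strictly positive and $|G_1|$ itself has no obvious $O(n)$ bound. Distributing the argument across all $(r-1)/2$ nontrivial levels, and exploiting that $G_j^2$ lands in the stronger deficit set $D_{2j-1}$, is exactly what collapses the $\Theta(n^2)$ averaging error down to the claimed $O(rn)$ term without ever needing an auxiliary bound on the individual $|G_j|$.
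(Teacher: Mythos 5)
Your proof is correct and follows essentially the same route as the paper: average Lemma~\ref{localcrit} over triangles for the even case, and for the odd case decompose the excess $\sum_f |S_f| - m(r)\binom{n}{2}$ into degree levels $G_j$ and $D_j$ and apply F\"uredi's Lemma~\ref{furedilemma} at each level. Your observation that $G_j^2 \subseteq D_{2j-1}$ (rather than the weaker $G_j^2 \subseteq D_j$ the paper uses) gives the slightly sharper constant $(r-1)n/4$ in place of $rn$, but this is the same layer-cake-plus-F\"uredi machinery; the paper explicitly makes no effort to tighten that secondary term.
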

\begin{proof}
The even case follows directly from Lemma~\ref{localcrit}, and we note that this bound is tight -- let $S$ contain the interval $[r/2,r]$ from each column. The bound in the odd case is slightly more difficult, and we make no effort to establish a tight bound, which should probably be $|S|\leq m(r)\binom{n}{2}+n/2$.

Let $r$ be odd, and let $A,B,C$ be three columns that form a triangle of $S$. Note that if for some $k\geq 1$ we have $|A|\geq m(r) + k$ and $|B| \geq m(r) + k$ then $|C| \leq m(r) - 2k + 1 \leq m(r) - k$ by Lemma \ref{localcrit}. Write $B_k$ for the set of columns in $S$ of order at least $m(r) + k$ and write $S_k$ for the set of columns in $S$ of order at most $m(r) - k$. Let $G_k$ be the graph on $[n]$ with edges $B_k$. Then by Lemma~\ref{furedilemma} we get $$|S_k|\geq e(G_k^2)\geq e(G_k)-\lfloor n/2 \rfloor \geq |B_k| - n.$$
Hence $$|S|-m(r)\binom{n}{2} = \sum_{k=1}^r k(|B_k| - |B_{k+1}|) - \sum_{k=1}^r k (|S_k|-|S_{k+1}|) = \sum_{k=1}^r (|B_k|-|S_k|)\leq nr,$$
and the result follows.\qed
\end{proof}

Now we are ready to prove a supersaturation-like result.

\begin{lem}\label{2ndthm}
Let $\epsilon>0$ and let $S\subset V(\mathcal{H})$ with no empty columns.
\begin{enumerate}
\item If $r$ is even and $|S|\geq (1+\epsilon)\binom{n}{2}m(r)$, then $S$ contains at least $\frac{\epsilon}{10} \binom{n}{3}$ hyperedges.
\item If $r$ is odd, $n>n_\epsilon$ sufficiently large and $|S|\geq (1+\epsilon)\binom{n}{2}m(r)$, then $S$ contains at least $\frac{\epsilon^4}{40000}\binom{n}{3}$ hyperedges.
\end{enumerate}
\end{lem}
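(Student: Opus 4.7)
The plan is a double-counting / averaging argument combined with Lemma~\ref{localcrit}, supplemented in the odd case by Lemma~\ref{furedilemma} and a Cauchy--Schwarz bound on degree sequences. For each $f\in\binom{[n]}{2}$ let $a_f=|\{i:(i,f)\in S\}|$, and for each triangle $T=\{f_1,f_2,f_3\}$ of $K_n$ let $s(T)=a_{f_1}+a_{f_2}+a_{f_3}$. Double-counting over edges yields
\[
\sum_T s(T)=(n-2)|S|\geq 3(1+\epsilon)m(r)\binom{n}{3},
\]
and since the hyperedges of $\mathcal{H}$ supported on different triangles of $K_n$ are disjoint, it suffices to lower bound the number of triangles containing at least one non-metric colour triple.

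For the even case, Lemma~\ref{localcrit} says that any triangle with $s(T)\geq 3m(r)+1$ contains a non-metric triple. Combined with $s(T)\leq 3r$ in the averaging bound, this gives $\bigl|\{T:s(T)\geq 3m(r)+1\}\bigr|\geq \frac{\epsilon\,m(r)}{r-m(r)}\binom{n}{3}$. For even $r\geq 4$ we have $m(r)=r/2+1\geq r/2-1=r-m(r)$, so this is at least $\epsilon\binom{n}{3}\geq \frac{\epsilon}{10}\binom{n}{3}$; the case $r=2$ is vacuous since $a_f\leq r=m(r)$ for every $f$.

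For the odd case Lemma~\ref{localcrit} only forces a hyperedge when $s(T)\geq 3m(r)+2$. I would set $T_{\mathrm{big}}=\{T:s(T)\geq 3m(r)+2\}$, $T_{\mathrm{mid}}=\{T:s(T)=3m(r)+1\}$, and define $B_1=\{f:a_f\geq m(r)+1\}$ and $S_1=\{f:a_f\leq m(r)-1\}$ as in Lemma~\ref{maxindep}. The same averaging now reads
\[
3(r-m(r))|T_{\mathrm{big}}|+|T_{\mathrm{mid}}|\geq 3\epsilon\,m(r)\binom{n}{3}.
\]
If $|T_{\mathrm{big}}|\geq \frac{\epsilon^4}{40000}\binom{n}{3}$ then we are done, since every $T\in T_{\mathrm{big}}$ carries a hyperedge. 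Otherwise $|T_{\mathrm{mid}}|$ absorbs the bulk of the sum, and since each $T\in T_{\mathrm{mid}}$ has at least one edge in $B_1$ (else $s(T)\leq 3m(r)$), double-counting pairs $(T,e)$ with $e\in T\cap B_1$ forces $|B_1|$ to be of order $\epsilon\,m(r)\,n^2$.

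Next, a Cauchy--Schwarz estimate on the degrees of the graph $([n],B_1)$ yields at least $2|B_1|^2/n-|B_1|$ cherries (unordered pairs of $B_1$-edges sharing a vertex) inside $B_1$. Each cherry closes to a triangle with two or three edges in $B_1$: if the closing edge $xy$ satisfies $a_{xy}\geq m(r)$ then $s(T)\geq 3m(r)+2$ and the triangle is in $T_{\mathrm{big}}$, while otherwise $xy\in S_1$, and this second possibility accounts for at most $(n-2)|S_1|$ cherries. Since each triangle of $T_{\mathrm{big}}$ contributes at most three cherries in $B_1$, bookkeeping gives
\[
\frac{2|B_1|^2}{n}-|B_1|\leq 4|T_{\mathrm{big}}|+(n-2)|S_1|.
\]
Either $|T_{\mathrm{big}}|$ already exceeds $\frac{\epsilon^4}{40000}\binom{n}{3}$ (this branch in fact yields $|T_{\mathrm{big}}|=\Omega(\epsilon^2 n^3)$), or $|S_1|\geq c\epsilon^2 n^2$ for a constant $c>0$. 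In the latter case, the identity $\sum_{f\in B_1}(a_f-m(r))\geq \epsilon\,m(r)\binom{n}{2}+\sum_{f\in S_1}(m(r)-a_f)$ forced by $|S|\geq(1+\epsilon)m(r)\binom{n}{2}$ sharpens the lower bound on $|B_1|$, and a second pass of the cherry argument (now invoking Lemma~\ref{furedilemma} in the form $|S_1|\geq|B_1|-\lfloor n/2\rfloor-3|T_{\mathrm{big}}|$) extracts the remaining hyperedges. The main obstacle is bookkeeping the constants through these two rounds; the factor $\frac{\epsilon^4}{40000}$ is loose enough to absorb substantial slack, and the hypothesis $n>n_\epsilon$ is used to discard the $O(rn)$ and $O(|B_1|)$ lower-order error terms.
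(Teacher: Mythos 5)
Your even case is correct and tracks the paper's averaging idea, though you execute it slightly differently: the paper averages over triangles directly and invokes Lemma~\ref{localcrit} with threshold $(1+\epsilon/10)3m(r)$, whereas you split at $3m(r)+1$ and solve the resulting linear bound, which is cleaner and actually yields $\epsilon\binom{n}{3}$ rather than $\frac{\epsilon}{10}\binom{n}{3}$. Your observation that $r=2$ is vacuous is right.

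The odd case is where you diverge sharply from the paper, and there is a genuine gap. You count hyperedges only from $T_{\mathrm{big}}$ (triangles with $s(T)\geq 3m(r)+2$), since Lemma~\ref{localcrit} guarantees nothing for $T_{\mathrm{mid}}$ triangles ($s(T)=3m(r)+1$). After the first cherry pass you reach the branch where $|S_1|$ is large, and you then claim a ``second pass,'' invoking Lemma~\ref{furedilemma}, ``extracts the remaining hyperedges.'' This is not carried out, and when one tries to complete it, the constants do not close. Combining $\sum_f(a_f-m(r))\geq\epsilon m(r)\binom{n}{2}$ with $|S_1|\geq|B_1|-\lfloor n/2\rfloor-3|T_{\mathrm{big}}|$ only produces $|T_{\mathrm{big}}|=\Omega(\epsilon n^2)$, two orders of magnitude short of the required $\Omega(\epsilon^4 n^3)$. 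The structural reason is that a cherry $x\text{--}z\text{--}y$ in $B_1$ with closing edge $xy\in S_1$ lands on a triangle of column profile $(m(r)+1,m(r)+1,c)$ with $c\leq m(r)-1$; the sum is $3m(r)+1$, i.e.\ precisely $T_{\mathrm{mid}}$, and such triples can genuinely avoid a non-metric triangle (for $r=5$: columns $[2,5],[2,5],\{3,4\}$; for $r=3$: columns $\{1,2,3\},\{1,2,3\},\{2\}$), so they contribute no hyperedge. Your first pass therefore does not force any hyperedges in this branch, and no size-only (i.e.\ column-cardinality) bookkeeping can distinguish the closing edges that do give hyperedges from those that do not.

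The paper sidesteps this entirely with a local argument: take $n_0=20/\epsilon$, show by averaging that at least $\frac{\epsilon}{4}\binom{n}{n_0}$ subsets $T\in\binom{[n]}{n_0}$ have $|f_S(T)|\geq m(r)\binom{n_0}{2}(1+\epsilon/3)$, apply Lemma~\ref{maxindep} to conclude each such restriction already contains a hyperedge (the $+rn_0$ slack is swallowed by $\epsilon m(r)\binom{n_0}{2}/3$), and divide by $\binom{n-3}{n_0-3}$. This avoids having to locate the hyperedges globally and thus never has to worry about the troublesome $T_{\mathrm{mid}}$ triangles. If you want to keep your global $B_1/S_1$ framework, you would need to replace the Furedi step by something that extracts $\Omega(\epsilon^4 n^3)$ triangles with $s(T)\geq 3m(r)+2$, which your current cherry count does not give; a restriction argument as in the paper, or a supersaturation for triangles entirely inside $B_1$, seems essential.
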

\begin{proof}
Suppose first that $r$ is even. Then there are at least $\frac{\epsilon}{10}\binom{n}{3}$ triangles in $G$ such that the corresponding columns contain at least $(1+\epsilon/10)3m(r)$ vertices from $S$. Indeed, if this was not the case, then
$$\frac{\frac{\epsilon}{10}\binom{n}{3}3r + \binom{n}{3}3m(r)(1+\epsilon/10)}{n-2}=\binom{n}{2}\left(m(r)+\frac{\epsilon(r+m(r))}{10}\right)>|S|,$$which is a contradiction. Hence part 1 of the lemma follows from Lemma \ref{localcrit}.

Now suppose $r$ is odd.  Given $T\subset [n]$, write $f_S(T)$ for the set of vertices of $S$ contained in the $\binom{|T|}{2}$ columns of $\mathcal{H}$ corresponding to the edges spanned by $T$. Set $n_0=20/\epsilon$, so that by Lemma \ref{maxindep}, whenever $T\subset [n]$ with $|T|=n_0$ and $|f_S(T)|\geq m(r)\binom{n_0}{2}(1+\frac{\epsilon}{3})$ then $\mathcal{H}[f_S(T)]$ contains a hyperedge.

First, we claim that there are at least $\frac{\epsilon}{4}\binom{n}{n_0}$ choices of $T\subset [n]$ with $|T|=n_0$ and $|f_S(T)|\geq m(r)\binom{n_0}{2}(1+\frac{\epsilon}{3})$. 

Indeed, if this was not the case, then we would have 
\begin{equation} 
\begin{split}             
|S|&\leq \frac{\frac{\epsilon}{4}\binom{n}{n_0}\binom{n_0}{2}r + \binom{n}{n_0}m(r)\binom{n_0}{2}(1+\frac{\epsilon}{3})}{\binom{n-2}{n_0-2}}=\binom{n}{2}\left(m(r)+\frac{\epsilon r}{4} + \frac{\epsilon m(r)}{3}\right)\\
&<\binom{n}{2}m(r)(1+\epsilon),
\end{split}
\end{equation}
which is not possible. So the number of hyperedges contained in $S$ is at least 
$$e(\mathcal{H}[S])\geq \frac{\epsilon}{4}\binom{n}{n_0}/\binom{n-3}{n_0-3}\geq \frac{\epsilon}{4}\frac{\epsilon^3}{20^3}\binom{n}{3},$$and the result follows.\qed
\end{proof}

Write $\bar{d}$ for the average degree of $\mathcal{H}$, and for $j\in [3]$ define the $j$-th maximum co-degree $$\Delta_j = \max \{ |\{e\in E(\mathcal{H}) : \sigma \subset e \}| : \sigma \subset V(\mathcal{H}) \text{ and } |\sigma |=j  \}.$$ Below, we will make use of  a version of the container theorem of~\cite{BMS,saxthom}, the way it was formulated by Mousset--Nenadov--Steger  \cite{steger}.
\begin{svgraybox}
\begin{thm}\label{container} 
There exists a positive integer $c$ such that the following holds for every  positive integer $N$. Let $\mathcal{H}$ be a $3$-uniform hypergraph of order $N$. Let $0\leq p\leq 1/(3^{6}c)$ and $0< \alpha <1$ be such that $\Delta(\mathcal{H},p)\leq \alpha/(27c)$, where $$\Delta(\mathcal{H},p)= \frac{4\Delta _2}{\bar{d}p} + \frac{2\Delta _3}{\bar{d}p^{2}}.$$
Then there exists a collection of containers $\mathcal{C}\subset \mathcal{P}(V(\mathcal{H}))$ such that

(i) every independent set in $\mathcal{H}$ is contained in some $C\in \mathcal{C}$,

(ii) for all $C\in \mathcal{C}$ we have $e(\mathcal{H}[C])\leq \alpha e(\mathcal{H})$, and

(iii) the number of containers satisfies $$\log |\mathcal{C}|\leq 3^{9}c(1+\log(1/\alpha))Np\log(1/p).$$
\end{thm}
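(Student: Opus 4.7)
The plan is to adapt the Kleitman--Winston certificate method to the $3$-uniform setting, along the lines of Balogh--Morris--Samotij and Saxton--Thomason. For each independent set $I$ in $\mathcal{H}$ I will deterministically produce a fingerprint $F \subseteq I$ of size at most $s := 3^{9}c\, Np\log(1/p)$ together with a container $C(F) \supseteq I$ that depends only on $F$ and satisfies $e(\mathcal{H}[C(F)]) \le \alpha\, e(\mathcal{H})$. The family $\mathcal{C} := \{C(F) : F \subseteq V(\mathcal{H}),\ |F|\le s\}$ then obeys (i) and (ii) by construction, while (iii) follows from the standard estimate $\log\binom{N}{\le s} \le s\log(eN/s)$, which after absorbing the $\log(1/\alpha)$ factor into $s$ is exactly of the required order.

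First I would fix an arbitrary linear order $\prec$ on $V(\mathcal{H})$ and run a scythe/peeling process. Maintain an ``available'' set $A$, initially equal to $V(\mathcal{H})$. In each round look at $\mathcal{H}[A]$ and flag as ``heavy'' those vertices whose contribution to $e(\mathcal{H}[A])$ — counted both via their link and via the co-degrees of incident pairs — exceeds a threshold calibrated to $p$. Scan the heavy vertices in the order $\prec$: if the scanned vertex lies in $I$, add it to $F$ and delete from $A$ every vertex that forms above-threshold-many hyperedges with vertices of $F$; if the scanned vertex lies outside $I$, delete only that vertex from $A$. Iterate until no heavy vertex remains and set $C(F) := A$. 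The critical invariant is that every deletion rule depends only on $F$ and $\prec$, never on $I$, so $C(F)$ can be reconstructed from the fingerprint alone, while $I \subseteq A$ is preserved at every step.

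The heart of the argument is quantitative: one must show that every round in which a vertex is added to $F$ shrinks $e(\mathcal{H}[A])$ by a multiplicative factor $1 - \Omega(p)$. This is exactly where $\Delta_2$ and $\Delta_3$ enter. A vertex $v$ can contribute at most $O(\Delta_2 \cdot |N_{\mathcal{H}[A]}(v)| + \Delta_3 \cdot (\text{link edges of } v))$ to edges destroyed; calibrating the heavy threshold so that a heavy vertex kills at least $p\bar{d}$ edges is feasible precisely under the hypothesis $\Delta(\mathcal{H},p) \le \alpha/(27c)$. Running the process for $s$ rounds then contracts $e(\mathcal{H}[A])$ by a factor of $(1-\Omega(p))^{s} \le \alpha$, which is where the $(1+\log(1/\alpha))$ factor in the conclusion comes from.

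The main obstacle will be the bookkeeping for this two-level scythe. Unlike the $2$-uniform Kleitman--Winston setting, where a single max-degree statistic suffices, here the link degree (governed by $\Delta_2$) and the co-pair degree (governed by $\Delta_3$) have to be tracked with thresholds of different orders in $p$, which is what forces the two separate summands $4\Delta_2/(\bar d p)$ and $2\Delta_3/(\bar d p^2)$ in $\Delta(\mathcal{H},p)$. The cleanest way I would implement this is the Mousset--Nenadov--Steger formulation: two nested scythes together with a union bound over the choice of ``type'' at each step, at which point matching the absolute constants $3^{6}c$, $27c$ and $3^{9}c$ in the statement is a routine calculation and the final counting bound $\log|\mathcal{C}| \le 3^{9}c(1+\log(1/\alpha))Np\log(1/p)$ follows.
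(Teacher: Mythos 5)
The paper does not prove this theorem. It is stated verbatim as the Mousset--Nenadov--Steger formulation of the Balogh--Morris--Samotij/Saxton--Thomason container theorem and cited from their paper; the authors use it as a black box in the proofs of Theorems~\ref{wojtekmain} and~\ref{cont}. So there is no ``paper's own proof'' for your sketch to be compared with, and a fair review can only judge whether your outline would actually yield the stated bounds if carried out.

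On that score, your high-level strategy (iterated scythe, fingerprint $F\subseteq I$ determining $C(F)$ independently of $I$, union bound over fingerprints) is the right one, but the quantitative accounting in your third paragraph does not produce the claimed constant. You assert that each vertex added to $F$ shrinks $e(\mathcal{H}[A])$ by a factor $1-\Omega(p)$, and then that $(1-\Omega(p))^{s}\le\alpha$ forces $s\approx\log(1/\alpha)/p$. This does not match the theorem's fingerprint budget $s\approx Np\log(1/p)\bigl(1+\log(1/\alpha)\bigr)$; the two agree only when $Np^{2}\log(1/p)\approx 1$, which is not assumed. The correct bookkeeping in the Mousset--Nenadov--Steger argument is two-level: an outer loop of roughly $1+\log(1/\alpha)$ rounds, each of which cuts $e(\mathcal{H}[A])$ by a \emph{constant} factor, and in each round an inner scythe whose fingerprint has size $O(Np\log(1/p))$, with the condition $\Delta(\mathcal{H},p)\le\alpha/(27c)$ used to guarantee that the high-codegree vertices you peel are few enough. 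Your current description conflates these two scales, so as written the estimate in (iii) would not come out. If you instead structure the argument as the nested (edge-halving round) $\times$ (size-$Np\log(1/p)$ scythe) recursion and use $\Delta_{2},\Delta_{3}$ exactly where you indicated, you recover the stated bound; but since this theorem is being imported rather than proved in the paper, you can simply cite it as the authors do.
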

\end{svgraybox}

{\it Proof of Theorem~\ref{wojtekmain}.}
Let $\mathcal{H}$ be the hypergraph defined earlier, i.e., the $3$-uniform hypergraph with vertex set formed by pairs of the $r$ colors and the  $\binom{n}{2}$ edges of $K_n$, with $3$-edges corresponding to non-metric triangles. Let $\epsilon, \delta >0$ be  arbitrarily small constants and set $p=\frac{1}{r\log^{2+\delta}n} $ and $\alpha = \frac{10^{10}c\log^{4+2\delta}n}{n}$. In $\mathcal{H}$ we have $\Delta_1\leq nr^2, \ \Delta_2\leq r,\  \Delta_3=1, \ \bar{d}\geq r^2n/64$ and $$\Delta (\mathcal{H},p)\leq4\left(\frac{64 r^2\log^{2+\delta}n}{r^2n}+\frac{64r^2\log^{4+2\delta}n}{2r^2n}\right)\leq \frac{\alpha}{27c}. $$
Then Theorem~\ref{container} provides containers with
 $$e(\mathcal{H}[C])\leq \alpha e(\mathcal{H}) \leq 10^{4}cr^3n^2\log^{4+2\delta}n,$$
 and the number of containers is 
 $$\log|\mathcal{C}|\leq\frac{ c3^{10}r  n^2\cdot \log n \cdot  \log{r}\cdot \log\log n }{r\log^{2+\delta}n} = o(n^2).$$
Now assume $$r=o\left(\frac{n^{1/3}}{\log^{(4+2\delta)/3}n}\right).$$
Then the maximum number of edges in a container is $o(n^3)$, hence by Lemma \ref{2ndthm}, and the fact that a useful container does not have an empty column,  we have for $n$ large enough, $$|V(C)|<(1+\epsilon)m(r)\binom{n}{2}.$$ Hence, the number of colourings in a container is at most $(1+\epsilon)^{\binom{n}{2}}m(r)^{\binom{n}{2}}=m(r)^{\binom{n}{2}+o(n^2)}$. The logarithm  of the number of containers is $o(n^2)$. The total number of good colourings is at most the number of containers times the maximum number of colourings in a container. Hence the total number of good colourings is $$m(r)^{\binom{n}{2}+o(n^2)},$$ as required.\qed\medskip

Now we turn our attention to the continuous setting. The set-up in \cite{wojtek} is as follows. Given a metric space with $n$ points and all distances being in $[0,1]$, we regard the set of distances as a vector in $[0,1]^{\binom{n}{2}}$. We will call the union of all such $n$ points in $[0,1]^{\binom{n}{2}}$ for all finite metric spaces the \emph{metric polytope} $M_n$. 
More precisely, the metric polytope $M_n$ is the convex polytope in $\R^{\binom{n}{2}}$ defined by the inequalities $0 < d_{ij} \le 1$ and $d_{ij} \le d_{ik} + d_{jk}$.

Note that if $a+b\geq c$ then 
\begin{equation} \label{ceilthingies}
\lceil a \rceil + \lceil b \rceil \geq \lceil c \rceil. 
\end{equation}
\begin{svgraybox}
\begin{thm}\label{cont}
Fix $\delta>0$ constant. Then for $n>n_\delta$ sufficiently large, we have $$(\text{vol} (M_n))^{1/\binom{n}{2}}\leq \frac{1}{2}+\frac{1}{n^{\frac{1}{6}-\delta}}.$$
\end{thm}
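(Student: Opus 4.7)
The plan is to estimate $\mathrm{vol}(M_n)$ by discretizing the metric polytope at scale $1/r$ and invoking the discrete count from Theorem~\ref{wojtekmain}. Fix a resolution $r = r(n)$ inside the admissible range of Theorem~\ref{wojtekmain}, and partition the ambient cube $[0,1]^{\binom{n}{2}}$ into $r^{\binom{n}{2}}$ half-open axis-aligned subcubes of side $1/r$, each labelled by a vector $\hat d \in \{1,\dots,r\}^{\binom{n}{2}}$ via the ceiling map $d\mapsto \hat d_e := \lceil r d_e \rceil$.

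The key observation is that if $d\in M_n$, then the label $\hat d$ of its subcube is itself a discrete metric space on $[n]$ with distances in $\{1,\dots,r\}$: applying the ceiling inequality~(\ref{ceilthingies}) to the scaled triangle inequality $r d_{ik}\le r d_{ij}+r d_{jk}$ yields $\hat d_{ik}\le \hat d_{ij}+\hat d_{jk}$. Hence the number of subcubes meeting $M_n$ is at most the number $N_r$ of metric spaces counted by Theorem~\ref{wojtekmain}, and
\begin{equation*}
\mathrm{vol}(M_n) \;\le\; N_r\cdot r^{-\binom{n}{2}} \;\le\; m(r)^{\binom{n}{2}+o(n^2)}\cdot r^{-\binom{n}{2}}.
\end{equation*}
Taking the $\binom{n}{2}$-th root gives $(\mathrm{vol}(M_n))^{1/\binom{n}{2}}\le (m(r)/r)\cdot m(r)^{o(1)}\le (\tfrac12 + \tfrac{1}{2r})\cdot (1+o(1))$. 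Choosing $r$ slightly below $n^{1/6}$, say $r = \lfloor n^{1/6-\delta/2}\rfloor$, makes the leading $1/(2r)$ correction of order $n^{-1/6+\delta/2}$, comfortably inside the target error $n^{-1/6+\delta}$.

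The main obstacle is quantitative: to conclude the claimed bound one must replace the symbolic $o(1)$ factor $m(r)^{o(1)}$ by an explicit estimate of order $n^{-1/6+\delta}$. The $o(n^2)$ exponent in Theorem~\ref{wojtekmain} comes from two sources in the container proof: the container count bound $\log|\mathcal{C}| = O(n^2\log n\log r\log\log n /\log^{2+\delta'} n)$ from Theorem~\ref{container}, and the supersaturation slack $(1+\epsilon_{\mathrm{ss}})^{\binom{n}{2}}$ from Lemma~\ref{2ndthm}. I would re-run that argument for the specific scale $r = n^{1/6-\delta/2}$, taking $\epsilon_{\mathrm{ss}}$ of order $n^{-1/6+\delta/2}$ and the container-theorem parameter $\delta'$ large enough that $|\mathcal{C}|^{1/\binom{n}{2}} = 1 + O(n^{-1/6+\delta})$; the delicate part is balancing the three error contributions (the $1/(2r)$ gap, the supersaturation slack, and the container correction) simultaneously against the single target $n^{-1/6+\delta}$, which is where the exponent $1/6$ in the statement originates.
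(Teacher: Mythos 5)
Your approach matches the paper's: discretize via the ceiling map so that $\mathrm{vol}(M_n)$ is at most (discrete metric count)$/r^{\binom{n}{2}}$, then re-run the container argument quantitatively at $r\approx n^{1/6-\delta/2}$ to replace the symbolic $o(n^2)$ of Theorem~\ref{wojtekmain} by explicit polynomial error terms. The paper does exactly this (taking $r$ the nearest even integer, with $1/p=n^{1/3-\delta/4}$ and $\alpha=300cn^{\delta-2/3}$), and the three error contributions you identify — the $O(1/r)$ gap in $m(r)/r$, the supersaturation slack $(1+n^{-1/6-\delta/6})$, and the container count $e^{n^{2-1/6-\delta/5}}$ — balance against the target $n^{-1/6+\delta}$ precisely as you describe.
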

\end{svgraybox}
\begin{proof}
First consider the discrete setting, colouring with $r$ colours, where $r$ is the even integer closest to $n^{\frac{1}{6} - \frac{\delta}{2}}$. W.l.o.g. $\delta<1/4$ and set
\[1/p=n^{\frac{1}{3} - \frac{\delta}{4}}, \qquad
\alpha=300cn^{\delta-\frac{2}{3}},
\]
where $c$ is the constant from Theorem \ref{container}.
Then $$\Delta(\mathcal{H},p)<300\left(\frac{1}{rnp}+\frac{1}{p^2r^2n}\right)\leq 300\left(\frac{n^{1/3 -\delta/4}}{n^{7/6-\delta/2}}+\frac{n^{2/3 - \delta/2}}{n^{4/3 - \delta}}\right)\leq \alpha, $$ and we get containers with $$e(\mathcal{H}[C])\leq \alpha n^3 r^3 \leq n^{3-1/6 -\delta/4},$$ where the number of containers satisfies $$\log|\mathcal{C}|\leq n^2rp\log^3{n}\leq n^{2-1/6 - \delta/5}.$$ 
Hence, by Lemma \ref{2ndthm}, the number of vertices in a container is at most $$|V(C)|\leq (1+n^{-1/6-\delta /6})\binom{n}{2}m(r).$$
This implies that the number of colourings contained in a container is at most $$\text{col}(C)\leq \left( \frac{V(C)}{\binom{n}{2}}\right)^{\binom{n}{2}} \leq \left((1+n^{-1/6 - \delta/6})m(r) \right)^{\binom{n}{2}} \leq m(r)^{\binom{n}{2}}e^{n^{2-1/6 -\delta/7}}.$$ 
That is,  the total number $X$ of colourings is at most $$X\leq m(r)^{\binom{n}{2}} e^{n^{2-1/6 -\delta/7}}e^{n^{2-1/6 - \delta/5}}\leq m(r)^{\binom{n}{2}}e^{n^{2-1/6 -\delta/8}}.$$ Now consider colourings in the continuous setting. Cut up each edge of the cube into $r$ pieces. We get by (\ref{ceilthingies}) that  
\begin{equation*}
\begin{split}
(\text{vol} (M_n))^{1/\binom{n}{2}} & \leq  \left( \frac{(m(r)+1)^{\binom{n}{2}}e^{n^{2-1/6 -\delta/8}}} {r^{\binom{n}{2}}}\right)^{1/\binom{n}{2}}\\
&\leq \left(2^{-\binom{n}{2}}\left(1+\frac{4}{r}\right)^{\binom{n}{2}}\right)^{1/\binom{n}{2}}  \left(1+\frac{1}{n^{1/6 + \delta/9}}\right)
  \leq  \frac{1}{2}+\frac{1}{n^{\frac{1}{6}-\delta}},
\end{split}
\end{equation*}
as required.
\qed\end{proof}

{\bf Remark.}  Kozma,  Meyerovitch,  Morris,   Peled  and  Samotij~\cite{morrissamo} using a stonger supersaturation result and a somewhat different container type of theorem, independently, parallel to our work,  improved the error term in Theorem~\ref{cont} to $(\log^2n) n^{-1/2}$, where the $-1/2$ is best possible as it was pointed out in~\cite{wojtek}.

It would be interesting to extend the above ideas to generalised metric spaces (note that there are several different definitions of these), hence we propose the following purposely vague question:

\begin{svgraybox}
\begin{problems}
It is a natural question to ask: is there an interesting extension of the problem discussed in this section,
when instead of requiring metric triangles, one wants metric $d$-dimensional simplices for a fixed $d$?
\end{problems}
\end{svgraybox}

\section{The largest $C_4$-free subgraph of a random graph}
\label{sec:4}

Now we turn our attention to proving Theorem \ref{randomC_4}. As before, for a graph $G$ write $G^2$ for the ``proper square'' of $G$, i.e., where $xy$ is an edge if and only if there is a $z$ such that $xz$ and $zy$ are edges in $G$. 

\medskip

To simplify the technicality of the proof, we present here only the case when $p=1/2$; for larger $p$ we can use a properly chosen smaller $c$ and the argument is the same.

Assume that $H$ is the large  $C_4$-free graph that we aim to find in $G(n,1/2)$. A natural approach would be to do what F\"{u}redi~\cite{ramsey} did, working on a Ramsey type problem. Using the Kleitman--Winston method, he gave an upper bound on the number of $C_4$-free graphs with $m$ edges. If $p$ is sufficiently small, then the expected number of copies of such graphs in $G(n,p)$ is $o(1)$, proving the desired result. His upper bound was, assuming that $m> 2n^{4/3}(\log^2 n)$, that the number of such graphs is at most $(4n^3/m^2)^m$, hence the expected number of them is $(4pn^3/m^2)^m=o(1)$, as long as $m> (1/2-a)n^{3/2}$ and $p< 1/16-a$, where $a>0 $ is some small constant. 

The idea of our proof is that the union bound in the above argument is too wasteful; hence, instead of considering each $H$ separately, we want to show that for every $C_4$-free subgraph $H$ with many edges, there exists some certificate in the graph.  Each of these certificates will say that in a certain part of the graph one needs to select $(1-o(1))\delta n^{3/2}$ edges of $G(n,p)$ out of possible $(1+o(1))\delta n^{3/2}$ pairs. Since the number of certificates is much smaller than $1 / ($probability that the above unlikely event holds$)$, which is of order $2^{c n^{3/2}}$, we can now show using the union bound that w.h.p. there is no such subgraph.

So our first task is to build such a certificate. Fix a $C_4$-free graph $H$ with at least $m> (1/2-c)n^{3/2}$ edges, where for the $p=1/2$ case we can set $c=10^{-5}$. First fix a linear order $\pi$ of the vertex set of $H$, which we will use as a tiebreaker among vertices. We will also need a second linear ordering, as follows:

\begin{defi}
 Given a graph $G$, a \emph{min-degree ordering} is an ordering $\{v_1, \ldots, v_n\}$ of $V(G)$ such that for each $i\in[n]$, the vertex $v_i$ is of minimum degree in $G[v_i, \ldots, v_n]$. When there are multiple such vertices, then we let $v_i$ be the first among them in the ordering $\pi$.
\end{defi}

Fix a min-degree ordering of $H$ and let $Y:=\{v_1,\ldots,v_{s}\}$ and $X=V(H)-Y$, where we will choose $s$ such that the minimum degree of $H[X]$ is large. 
Now we fix $F\subset H[X]$ with the following properties. The first is that $F$ is sparse, i.e., $e(F)\le n^{3/2}/\log^2 n.$ The second is that the independent sets in $F^2$ approximate the independent sets in $H^2[X]$ rather well. In particular, we choose $F$ such that every large independent set of $H^2[X]$ is in a `container' determined by $F$, where the number of containers is at most $r:=2^{n^{2/5}\log^{20} n}$.
The key observation is that for every $v_j\in Y$ its neighborhood in $X$ spans an independent set in $H^2[X]$, as $H$ is $C_4$-free.

The certificate for $H$ will be the vector $[Y, F, \{d_j\}_{j=1}^{s}, \{r_j\}_{j=1}^{s}]$, where 
$d_j:= |N(v_j)\cap X|$, and $r_j\le r$ is the index of the container containing $N(v_j)\cap X$, and $s=|Y|.$  The number of certificates is at most $2^n\cdot \binom{n^2/2}{n^{3/2}/\log^2 n}\cdot n^n\cdot  r^n\le 2^{2n^{3/2}/\log n}.$

After this preparation, the proof is simple: for every $H$ we fix a certificate. Standard arguments show that if $H$ is dense, then there must be many edges between $X$ and $Y$, however, the number of places to put the edges, given the certificate of $H$ is w.h.p. not sufficiently big, and here we can apply the union bound using the {\it number of certificates} to bound the probability that $H\subseteq G(n,1/2)$. Note that we give a bound on the probability that $H \subseteq G(n,1/2)$ \emph{simultaneously} for all $H$ with a given certificate.

\medskip

We shall use some standard properties of $C_4$-free graphs:

\begin{thm}\label{extremal}
(i) Let $R$ be a $C_4$-free graph on $n$ vertices. Then $e(R)\le 0.5 n^{3/2} + n$.\\
(ii) Let $d_1,\ldots, d_n$ be the degree sequence of $R$, where  $R$ is a $C_4$-free graph on $n$ vertices. Then $\sum d_i^2 \le n^2 + 2n^{3/2}$.\\
(iii)  Let $R$ be a $C_4$-free bipartite graph with class sizes $a\le b$. Then $e(R)\le a\sqrt b+ 2b.$
\end{thm}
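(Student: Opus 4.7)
The plan is to prove all three parts by the classical K\H{o}v\'ari--S\'os--Tur\'an double-counting of \emph{cherries} (paths of length $2$), combined with Cauchy--Schwarz or convexity. The single structural input used throughout is that in a $C_4$-free graph any two distinct vertices share at most one common neighbour, so the total number of cherries is bounded by the number of unordered pairs of possible endpoints.

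For part (ii), fix a $C_4$-free graph $R$ on $n$ vertices with degree sequence $d_1,\ldots,d_n$. The number of cherries centred at vertex $i$ is $\binom{d_i}{2}$, and the $C_4$-free condition gives
$$\sum_{i=1}^n \binom{d_i}{2} \le \binom{n}{2}.$$
Expanding yields $\sum d_i^2 \le n(n-1) + \sum d_i = n(n-1) + 2e(R)$. To derive (i) I would combine this with Cauchy--Schwarz $(\sum d_i)^2 \le n \sum d_i^2$, i.e.\ $4 e(R)^2 \le n\bigl(n(n-1) + 2e(R)\bigr)$, and solve the resulting quadratic to obtain $e(R) \le \tfrac{1}{2} n^{3/2} + \tfrac{n}{4} \le \tfrac{1}{2} n^{3/2} + n$. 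Substituting this bound back into $\sum d_i^2 \le n(n-1) + 2e(R)$ gives $\sum d_i^2 \le n^2 + 2 n^{3/2}$, establishing (ii).

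For (iii), let $R$ be $C_4$-free bipartite with parts $A, B$ of sizes $a \le b$. I would count cherries with centre in $B$ and both leaves in $A$: any two vertices of $A$ have at most one common neighbour in $B$, so
$$\sum_{v\in B} \binom{d(v)}{2} \le \binom{a}{2}.$$
By convexity (Jensen applied to $\binom{x}{2}$), the left-hand side is at least $b \binom{e(R)/b}{2} = \bigl(e(R)^2/b - e(R)\bigr)/2$. Rearranging gives $e(R)^2 \le b\,e(R) + a(a-1) b \le b\, e(R) + a^2 b$, and solving this quadratic yields $e(R) \le b/2 + \sqrt{b^2/4 + a^2 b} \le b + a\sqrt{b} \le 2b + a\sqrt{b}$, using $\sqrt{x+y} \le \sqrt{x} + \sqrt{y}$.

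These arguments are textbook, so there is no real obstacle; the only points requiring care are keeping the constants sharp enough to hit the stated additive terms -- in particular the $+n$ in (i), for which our calculation leaves $3n/4$ of slack, and the application of subadditivity of $\sqrt{\cdot}$ in (iii), which converts the seemingly awkward term $\sqrt{b^2/4+a^2b}$ into the clean bound $a\sqrt{b}+2b$.
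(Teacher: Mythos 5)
Your proof is correct, and since the paper simply cites these as ``standard properties of $C_4$-free graphs'' without giving a proof, the K\H{o}v\'ari--S\'os--Tur\'an cherry-counting plus Cauchy--Schwarz/Jensen route you follow is exactly the expected textbook argument. Your calculations are sound at each stage: $\sum\binom{d_i}{2}\le\binom{n}{2}$ gives $\sum d_i^2\le n(n-1)+2e(R)$; combining with $(2e)^2\le n\sum d_i^2$ and solving the quadratic yields $e\le\tfrac{n}{4}+\tfrac12 n^{3/2}$, which when substituted back gives (ii); and in (iii) the convexity step and the subadditivity $\sqrt{b^2/4+a^2b}\le b/2+a\sqrt{b}$ in fact give the slightly stronger $e\le b+a\sqrt{b}$. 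One small point worth stating explicitly: in (iii) the Jensen step is only informative when $e(R)\ge b$, but when $e(R)<b$ the claimed bound holds trivially since $a,b\ge 1$.
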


Next we prove a lower bound on $e_H[X,Y]$. In everything that follows, we fix a $C_4$-free graph $H$ on $n$ vertices with a min-degree ordering $\{v_1,\ldots, v_n\}$, and $e(H)>\frac{1}{2}(1-c)n^{3/2}$, where $c>0$ is a sufficiently small constant.

\begin{lem}\label{manyedgesbetween}
For any two constants $\gamma, \delta$ with $0<\gamma<1/2$ and $0<\delta<1/2$ there exist constants $c_0= c_0(\delta,\gamma)$ and $n_0= n_0(\delta,\gamma,c)$ such that if $0<c<c_0, \ n>n_0$ and  $Y=\{v_1,\ldots, v_{\delta n}\}$ and $X=V(H)\backslash Y$, then 
\begin{equation}\label{lowdense}
e(X,Y)>(1-\gamma)\delta n^{3/2}.
\end{equation}
\end{lem}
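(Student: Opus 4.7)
The plan is to isolate $e(X,Y)$ from the identity $e(H)=e(H[X])+e(X,Y)+e(H[Y])$ by bounding the two induced terms from above via Theorem~\ref{extremal}(i). Since $H[X]$ and $H[Y]$ are $C_4$-free (as induced subgraphs of $H$) on $(1-\delta)n$ and $\delta n$ vertices respectively, Theorem~\ref{extremal}(i) gives
\[
e(H[X])\le\tfrac{1}{2}(1-\delta)^{3/2}n^{3/2}+(1-\delta)n,\qquad e(H[Y])\le\tfrac{1}{2}\delta^{3/2}n^{3/2}+\delta n.
\]
Combining these with the hypothesis $e(H)>\tfrac{1}{2}(1-c)n^{3/2}$ and the identity yields
\[
e(X,Y)\;>\;\tfrac{1}{2}\Bigl((1-c)-(1-\delta)^{3/2}-\delta^{3/2}\Bigr)n^{3/2}-2n.
\]

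Next, I would check when the right-hand side exceeds $(1-\gamma)\delta n^{3/2}$. Rearranging, the desired inequality reduces to
\[
c\;<\;1-(1-\delta)^{3/2}-\delta^{3/2}-2(1-\gamma)\delta\;-\;o(1).
\]
A Taylor expansion of $(1-\delta)^{3/2}$ shows that to leading order in small $\delta$ this threshold equals $\delta\bigl(2\gamma-\tfrac{1}{2}\bigr)+O(\delta^{3/2})$, which is positive provided $\gamma>1/4$ and $\delta$ is sufficiently small (for larger $\delta$ or smaller $\gamma$ one checks it numerically). I would then set $c_0=c_0(\delta,\gamma)$ to be any positive constant strictly below this threshold, and take $n_0$ large enough that the additive $-2n$ error is swamped by $c_0 n^{3/2}/2$.

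The single step needing care is verifying that the threshold $c_0(\delta,\gamma)$ is actually positive for the intended $(\gamma,\delta)$-regime. For pairs $(\gamma,\delta)$ where it is not positive, the hypothesis $0<c<c_0$ is empty and the conclusion holds vacuously (consistent with the upper bound $e(X,Y)\le\delta\sqrt{1-\delta}\,n^{3/2}+O(n)$ from Theorem~\ref{extremal}(iii), which already precludes the claim for $\delta$ close to $1/2$ and small $\gamma$). Observe that this argument does \emph{not} use the min-degree ordering that defines $Y$---the bound is valid for any subset $Y\subset V(H)$ with $|Y|=\delta n$---since the min-degree property is only needed later, in the container construction used to prove Theorem~\ref{randomC_4}.
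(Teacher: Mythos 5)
Your argument is a genuinely different (and simpler) route than the paper's, but it is strictly weaker, and it does not prove the lemma in the full range $0<\gamma<1/2$ that is asserted (and later used). Your bound
\[
e(X,Y)\;>\;\tfrac{1}{2}\bigl((1-c)-(1-\delta)^{3/2}-\delta^{3/2}\bigr)n^{3/2}-O(n)
\]
yields a positive threshold $c_0(\delta,\gamma)$ only when
$\tfrac{1}{2}\bigl(1-(1-\delta)^{3/2}-\delta^{3/2}\bigr)>(1-\gamma)\delta$.
Expanding for small $\delta$, the left side is $\tfrac34\delta+O(\delta^{3/2})$, so you need $\gamma>\tfrac14+O(\sqrt\delta)$. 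For any $\gamma\le 1/4$ your threshold is negative, no matter how small $\delta$ is. The "vacuously true'' escape is not available: the lemma asserts the \emph{existence} of a positive $c_0(\delta,\gamma)$ for every $\gamma,\delta\in(0,1/2)$ (if $c_0\le 0$ were permitted the statement would be contentless, and the corollary that follows, which needs $\gamma$ to be an arbitrarily small constant, would collapse). In particular the regime $\gamma\le 1/4$ that your argument does not reach is exactly the one used downstream.

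The paper's proof gets a stronger conclusion by not using only the edge-count bound of Theorem~\ref{extremal}(i). Instead it sets $e(X,Y)=(1-\gamma')\delta n^{3/2}$ and $e(Y)=\tfrac12\beta(\delta n)^{3/2}$, applies convexity of $x\mapsto x^2$ to the degree sums inside $Y$ and inside $X$, and compares the resulting lower bound on $\sum_i d_i^2$ against the $C_4$-free degree-square bound $\sum_i d_i^2\le n^2+2n^{3/2}$ of Theorem~\ref{extremal}(ii). Working it out, at $c=0$ that comparison gives
\[
\tfrac{1}{n^2}\sum_i d_i^2-1\;\ge\;\tfrac{\delta}{1-\delta}\bigl(\gamma'-\beta\sqrt{\delta}\bigr)^2,
\]
so a positive $c_0$ is obtained whenever $\gamma>\sqrt\delta$, i.e., for every fixed $\gamma>0$ one may pick $\delta<\gamma^2$. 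This is exactly what the application needs (small $\gamma$, then $\delta$ chosen even smaller), and it is where the second-moment information pays off over the plain edge count. Your observation that neither proof visibly uses the min-degree property of $Y$ is worth flagging, but it should make you \emph{more} cautious, not less: as your own remark about Theorem~\ref{extremal}(iii) shows, for arbitrary $Y$ and $\delta$ close to $1/2$ the claimed inequality cannot hold for small $\gamma$, so the ordering cannot simply be discarded as irrelevant to the statement.

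In short: replacing Theorem~\ref{extremal}(i) with Theorem~\ref{extremal}(ii) plus the convexity step is not an optional refinement here; it is what makes the lemma usable at small $\gamma$, and your version does not reach that regime.
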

\begin{proof}
Suppose we have the above set-up, yet \eqref{lowdense} is false. Let $\gamma'\geq\gamma$ be such that $e(X,Y)=(1-\gamma')\delta n^{3/2}$ and let $\beta\geq 0$ be such that $e(Y)=\frac12 \beta(\delta n)^{3/2}$.  By Theorem \ref{extremal}(i) we have $\beta\leq1+\frac{1}{\delta\sqrt{n}}$. Now note that
\begin{equation*}
 \sum_{i=1}^{|Y|}d_i=2e(Y)+e(X,Y)=\beta(\delta n)^{3/2}+(1-\gamma')\delta n^{3/2}.
\end{equation*}
By the convexity of the function $x^2$ we get
\begin{equation}\label{squaresum1}
 \sum_{i=1}^{|Y|}d_i^2\geq\delta n\left(\frac{\beta(\delta n)^{3/2}+(1-\gamma')\delta n^{3/2}}{\delta n}\right)^2 \geq \delta n\left(\beta(\delta n)^{1/2}+(1-\gamma') n^{1/2}\right)^2.
\end{equation}

\noindent Moreover, we have 
\begin{equation*}
 \sum_{i=|Y|+1}^{n}d_i=2e(H)-2e(Y)-e(X,Y)\geq (1-c)n^{3/2}- \beta(\delta n)^{3/2}-(1-\gamma')\delta n^{3/2}.
\end{equation*}

\noindent Hence, again by convexity, we get
\begin{equation}\label{squaresum2}
 \sum_{i=|Y|+1}^{n}d_i^2\geq \frac{\left((1-c)n^{3/2}- \beta(\delta n)^{3/2}-(1-\gamma')\delta n^{3/2}\right)^2}{n-\delta n}.
\end{equation}

\noindent We will derive a contradiction with Theorem~\ref{extremal}(ii) by showing that 

\begin{equation*}
 \sum_{i=1}^{n}d_i^2>n^2+2n^{3/2}.
\end{equation*}

\noindent To achieve this, we combine (\ref{squaresum1}) and (\ref{squaresum2}):

\begin{equation}\label{contradiction_goal}
\frac{1}{n^2} \sum_{i=1}^{n}d_i^2  \geq \delta \left(\beta\delta ^{1/2}+(1-\gamma') \right)^2 + \frac{\left((1-c)- \beta\delta ^{3/2}-(1-\gamma')\delta \right)^2}{1-\delta }. 
\end{equation}

We want to show that  the right hand side (denoted by $A=A(c,\delta,\gamma)$) is larger than $1$ for $c$ sufficiently small. Observe that $A$ is a continuous function of $c$, hence it is enough to show that $A>1$ for $c=0$. Now since at $c=0$ we have
\begin{equation*}
 A-1=\frac{\delta}{1-\delta}(\beta\sqrt{\delta}-\gamma')^2,
\end{equation*}
the lemma follows. \qed

\end{proof}

An instant corollary of Lemma \ref{manyedgesbetween} is that by adding a few vertices to $Y$ we may assume that the minimum degree of $H[X]$ is large:

\begin{cor}
For any two constants $\gamma, \delta$ with $0<\gamma<1/2$ and $0<\delta<1/2$ there exist constants $c_0(\delta,\gamma)$ and $n_0(\delta,\gamma,c)$ such that if $0<c<c_0$ and $n>n_0$ then there exists an $\alpha$ with $\delta/2<\alpha<\delta$ such that if we set $Y=\{v_1,\ldots, v_{\alpha n}\}$ and $X=V(H)\backslash Y$  then the minimum degree of $H[X]$ satisfies $$\delta (H[X])\geq (1-2\gamma)\sqrt{n}.$$
\end{cor}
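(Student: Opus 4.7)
The plan is to argue by contradiction. Suppose no such $\alpha$ exists; then for every integer $k$ with $\delta n/2 < k < \delta n$, the minimum degree of $H[\{v_{k+1},\ldots,v_n\}]$ is strictly less than $(1-2\gamma)\sqrt n$. By the defining property of the min-degree ordering this minimum is attained at $v_{k+1}$ and equals $d_{k+1} := \deg_{H[\{v_{k+1},\ldots,v_n\}]}(v_{k+1})$. Writing $M := \{v_i : \delta n/2 < i \le \delta n\}$, a set of size $(1+o(1))\delta n/2$, our hypothesis becomes $d_i < (1-2\gamma)\sqrt n$ for every $v_i \in M$.

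The first step is to apply Lemma~\ref{manyedgesbetween} with the same $\delta$ and $\gamma$ (shrinking $c_0$ if necessary to meet its threshold) to obtain
\[
e(X_\delta,Y_\delta) > (1-\gamma)\delta n^{3/2},
\]
where $Y_\delta = Y_{\delta/2} \cup M$ and $X_\delta = V(H) \setminus Y_\delta$. The key observation is that for every $v_i \in M$ the set of vertices later in the ordering decomposes as $X_\delta$ together with a subset of $M$ disjoint from $v_i$, so all edges from $v_i$ to $X_\delta$ are already counted in $d_i$. Summing over $M$,
\[
e(X_\delta, M) \le \sum_{v_i \in M} d_i < |M|(1-2\gamma)\sqrt n = (1-2\gamma)\tfrac{\delta}{2}\, n^{3/2},
\]
and subtracting from the bound on $e(X_\delta,Y_\delta)$ yields the lower bound
\[
e(X_\delta, Y_{\delta/2}) = e(X_\delta, Y_\delta) - e(X_\delta, M) > (1-\gamma)\delta n^{3/2} - (1-2\gamma)\tfrac{\delta}{2} n^{3/2} = \tfrac{\delta}{2}\, n^{3/2}.
\]

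For the matching upper bound, note that the bipartite subgraph of $H$ between $X_\delta$ (of size $(1-\delta)n$) and $Y_{\delta/2}$ (of size $\delta n/2$) is $C_4$-free, and since $\delta < 1/2 < 2/3$ the smaller side has size $\delta n/2 \le (1-\delta)n$, so Theorem~\ref{extremal}(iii) gives
\[
e(X_\delta, Y_{\delta/2}) \le \tfrac{\delta n}{2}\sqrt{(1-\delta)n} + 2(1-\delta)n = \tfrac{\delta}{2}\sqrt{1-\delta}\, n^{3/2} + O(n).
\]
Because $\sqrt{1-\delta}$ is a constant strictly less than $1$, for all $n$ large enough (depending only on $\delta$) this contradicts the lower bound $\tfrac{\delta}{2} n^{3/2}$, which closes the contradiction.

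The main (and only non-routine) step is recognising that the pointwise bound $d_i < (1-2\gamma)\sqrt n$ for $v_i \in M$ translates directly into a bipartite-cut bound on $e(X_\delta, M)$; this is where the min-degree ordering does all the work. Everything else is bookkeeping: Lemma~\ref{manyedgesbetween} supplies the lower bound on $e(X_\delta, Y_\delta)$, Theorem~\ref{extremal}(iii) supplies the upper bound on $e(X_\delta, Y_{\delta/2})$, and the constant gap $1-\sqrt{1-\delta}>0$ provides the slack that absorbs the $O(n)$ error and the harmless rounding of $\delta n$ and $\delta n /2$.
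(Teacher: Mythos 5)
Your proof is correct, and its skeleton coincides with the paper's: you decompose $Y_\delta$ into $Y_{\delta/2}$ and $M$, use the (negated) conclusion to bound the right-degrees on $M$, and thereby bound $e(X_\delta,M)$, then derive a contradiction with Lemma~\ref{manyedgesbetween}. The only genuine difference is how you handle $e(X_\delta,Y_{\delta/2})$. The paper observes that for each $i\le \delta n/2$ the min-degree ordering together with Theorem~\ref{extremal}(i) gives $d^*(i)\le\sqrt n$ directly, which, summed with the bound for $M$, yields $e(Y',X')<(1-\gamma)\delta n^{3/2}$ and contradicts Lemma~\ref{manyedgesbetween} immediately -- no further estimate is needed. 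You instead first isolate a lower bound $e(X_\delta,Y_{\delta/2})>\tfrac{\delta}{2}n^{3/2}$ and contradict it with the bipartite Kővári--Sós--Turán bound from Theorem~\ref{extremal}(iii). Both are valid; the paper's route is slightly more economical since it avoids invoking Theorem~\ref{extremal}(iii) and produces the contradiction in one inequality, while yours buys a cleaner separation of "lower bound from Lemma~\ref{manyedgesbetween}'' and "upper bound from extremal graph theory'' at the cost of one extra step.
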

\begin{proof}
 For each $i\leq \delta n$ let $d^*(i)$ be the number of neighbours of $v_i$ in $\{v_{\delta n +1},\ldots, v_n\}$. Then by the ordering and by Theorem \ref{extremal}(i) we have $d^*(i)\leq \sqrt{n}$ for all $i$. Note that by the properties of our vertex ordering, all we need to do is find an index $i$ with $\delta n/2<i<\delta n$ such that $d^*(i)\geq (1-2\gamma)\sqrt{n}$. Indeed, then we could set $X=\{v_i,\ldots,v_n\}$.  So assume for contradiction that $d^*(i)< (1-2\gamma)\sqrt{n}$ for all $i$ with $\delta n/2<i<\delta n$. Let $Y'=\{v_1,\ldots, v_{\delta n}\}$ and $X'=V(G)\backslash Y'$. Now  $$e(Y',X')<\frac{\delta n}{2} \sqrt{n} + \frac{\delta n}{2} (1-2\gamma )\sqrt{n}\leq (1-\gamma)\delta n^{3/2},$$ contradicting Lemma~\ref{manyedgesbetween}.\qed
\end{proof}

We will need the following  container lemma for graphs:
\begin{svgraybox}
\begin{lem}\label{insidecont}
Let $t = \log^3n$, let $\epsilon>0$ and $0<b<1/4$ be small constants, and let $n$ be sufficiently large depending on $\epsilon$ and $b$. Let $H$ be an $n$-vertex  $C_4$-free graph with  $X\subset V(H) $ and  $|X|\ge n/2$, where every $v\in X$ has $d(v)> (1-b)n^{1/2}$. Then there is an $F\subset H$ with $e(F)\le 2 n^{3/2}/t$ such that there are $C_1,\ldots, C_r\subset X$ that \\
(i)  for every independent set  in $H^2[X]$ there is a $C_i$ containing it,\\
(ii) $r< n^{ n^{2/5}t^5}$,\\
(iii)  $C_1,\ldots, C_r$ depend only on $F$,\\
(iv) $|C_i| \le (1+4b)n^{1/2}$ for every $i$.\\
\end{lem}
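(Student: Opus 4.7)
The plan is to apply a Kleitman--Winston style container argument to the graph $G_F := F^2[X]$ for a carefully chosen sparse subgraph $F\subseteq H$. Since $F\subseteq H$ implies $F^2\subseteq H^2$ as edge sets, every independent set $I$ of $H^2[X]$ is also independent in $F^2[X]$, so it suffices to produce containers for independent sets of $F^2[X]$. I would pick $F$ probabilistically by including each edge of $H$ independently with probability $p = 4/t$; then the expected size of $E(F)$ is at most $p\cdot e(H) \leq 2n^{3/2}/t$, and a Chernoff concentration plus a union bound would yield a deterministic $F$ with $|E(F)|\leq 2n^{3/2}/t$ satisfying the key density property: for every $L\subseteq X$ with $|L|\geq (1+3b)\sqrt n$ that can be encountered by the peeling algorithm below, the graph $F^2[L]$ has maximum degree at least $|L|/(Ct^2)$ for some absolute constant $C$.

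Given $F$, for each independent set $I$ in $H^2[X]$ I would run the following algorithm. Initialize $L:=X$ and $S:=\emptyset$, and while $|L|>(1+3b)\sqrt n$, find $v\in L$ of maximum $F^2[L]$-degree with ties broken by $\pi$; if $v\in I$, add $v$ to $S$ and delete $\{v\}\cup N_{F^2[L]}(v)$ from $L$; otherwise delete $\{v\}$ from $L$. Output $C(S):=L\cup S$. Given only $F$ and a candidate fingerprint $S$, one can re-execute the algorithm by branching ``yes'' exactly when $v\in S$; this reproduces the original run faithfully because $I$ is independent in $F^2$, so no $v\in I$ is ever eliminated by a neighbor-removal step, and consequently every $v\in I$ actually processed ends up in $S$. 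Hence $I\subseteq C(S)$ (property (i)), and the whole collection of containers depends only on $F$ (property (iii)).

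The density property of $F^2$ forces each ``yes'' step to shrink $|L|$ by a factor of at most $1-\Omega(t^{-2})$, so the algorithm terminates after $O(t^2\log n) = O(\log^7 n)$ yes-steps, giving $|S|=O(\log^7 n)\leq n^{2/5}t^5$. The number of distinct fingerprints, and hence the number of containers, is therefore bounded by $\binom{|X|}{n^{2/5}t^5}\leq n^{n^{2/5}t^5}$ (property (ii)); each container satisfies $|C(S)| = |L|+|S| \leq (1+3b)\sqrt n + O(\log^7 n) \leq (1+4b)\sqrt n$ for $n$ sufficiently large (property (iv)).

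The main technical hurdle is the density claim for $F^2[L]$. The $C_4$-free hypothesis together with $d_H(v)\geq (1-b)\sqrt n$ on $X$ gives, via a counting of paths of length two through $\sum_z\binom{|N(z)\cap L|}{2}$, a lower bound on the maximum degree of $H^2[L]$ that then transfers to $F^2[L]$ by Chernoff bounds on the random sparsification. The union bound over all $L\subseteq X$ of size at least $(1+3b)\sqrt n$ is delicate, especially in the tail regime where $|L|$ approaches the stopping threshold $\sqrt n$: configurations resembling a maximum independent set of $H^2[X]$ plus a handful of extras need to be ruled out by a stratified argument (a separate union bound at each dyadic scale of $|L|$), and the parameters $p$ and the target degree must be tuned carefully so that both $|E(F)|\le 2n^{3/2}/t$ and the uniform $|L|/(Ct^2)$ degree lower bound can simultaneously be achieved.
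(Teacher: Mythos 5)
Your proposal follows the same two-step strategy as the paper: random\-ly sparsify $H$ to $F$ by keeping edges with some small probability, then run a Kleitman--Winston max-degree peeling on $F^2[X]$, outputting the fingerprint $S$ and container $L\cup S$. The algorithm, the verification that the run is reproducible from $(F,S)$ alone, and the argument that independent sets in $H^2[X]$ survive in $L\cup S$ all match the paper. Two points, however, differ from the paper and are where your proposal is not yet a proof.

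First, a minor parameter slip: with $p=4/t$ you have $\mathbb{E}|E(F)|\le p\cdot e(H)\le (4/t)\bigl(\tfrac12 n^{3/2}+n\bigr)\approx 2n^{3/2}/t$, so the expectation sits essentially \emph{at} the target $2n^{3/2}/t$; Chernoff then does not give you $|E(F)|\le 2n^{3/2}/t$ w.h.p. The paper takes $p=1/t$, which makes $\mathbb{E}|E(F)|\approx n^{3/2}/(2t)$ and leaves a comfortable factor of $4$ of slack.

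Second, and more substantively, your density claim --- ``for every $L\subseteq X$ with $|L|\ge(1+3b)\sqrt n$ that can be encountered by the peeling algorithm, $\Delta(F^2[L])\ge |L|/(Ct^2)$'' --- is not established and is phrased circularly, since the sets $L$ that the algorithm visits depend on $F$, so you cannot union-bound over them. The paper avoids this by proving a \emph{static} density statement for only two fixed sizes: for every $Z\subseteq X$ with $|Z|=n^{3/5}$ we have $e(F^2[Z])\gtrsim |Z|^2/t^2$, and for every $Z$ with $|Z|=(1+3b)\sqrt n$ we have $e(F^2[Z])\gtrsim bn/t^2$ via edge-disjoint cherries, the latter being exactly the tail regime where the naive sum-of-$\binom{d}{2}$ convexity argument breaks down (the $F$-degree toward $Z$ drops below $1$, and you need the degree-sum slack $(1-b)(1+3b)n-n\ge bn$ to harvest $\Omega(bn)$ edge-disjoint cherries). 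By monotonicity these two fixed sizes give a lower bound on $\Delta(F^2[A])$ whenever $|A|$ exceeds the corresponding threshold --- but notice this is an \emph{absolute} removal amount per yes-step ($\approx n^{3/5}/t^2$ and $\approx b\sqrt n/t^2$ in the two phases), not the \emph{multiplicative} shrinkage $\Delta\ge |A|/(Ct^2)$ you assert. Consequently the paper only gets $|T|=O(n^{2/5}t^2)$, not your $O(t^2\log n)$; that weaker bound is already enough for $r<n^{n^{2/5}t^5}$ and for $|C_i|\le(1+4b)\sqrt n$. If you want your stronger multiplicative claim you would have to prove the density bound at every scale $|Z|\ge(1+3b)\sqrt n$ with a stratified union bound (plausible, but it is additional work you have not done); the simpler and safer route is the two-size lemma the paper actually uses.
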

\end{svgraybox}

\begin{proof}
We first prove, as Kleitman--Winston \cite{kleitman}, that $H^2[X]$ does not have a large sparse subset:

\begin{lem}\label{bighole}
{ Let $Z \subset X$ with $|Z|=C n^{1/2}$, where $C=C(n)\gg 1$. Then $e(H^2[Z])> C^2  n/8$.}
\end{lem}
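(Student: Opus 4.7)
The plan is to count edges of $H^2[Z]$ by cherries (paths of length two) whose endpoints lie in $Z$, making essential use of the $C_4$-freeness of $H$. Since any two distinct vertices of $H$ share at most one common neighbour, each edge $\{x,y\}$ of $H^2[Z]$ corresponds to a unique cherry $x$--$z$--$y$ with $z\in N_H(x)\cap N_H(y)$; counting by the centre yields
\[
e(H^2[Z]) \;=\; \sum_{z\in V(H)}\binom{|N_H(z)\cap Z|}{2}.
\]

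Next I would estimate the number of $H$-edges incident to $Z$ by double counting:
\[
\sum_{z\in V(H)} |N_H(z)\cap Z| \;=\; \sum_{v\in Z} d_H(v) \;\geq\; |Z|\cdot (1-b)n^{1/2} \;=\; C(1-b)\,n,
\]
where the inequality uses the minimum-degree hypothesis on $X\supseteq Z$. Because $\binom{x}{2}=\tfrac12 x(x-1)$ is convex, Jensen's inequality applied over the $n$ summands gives
\[
e(H^2[Z]) \;\geq\; n\binom{C(1-b)}{2} \;=\; \frac{C(1-b)\bigl(C(1-b)-1\bigr)\,n}{2}.
\]

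Finally I would close the arithmetic: since $b<1/4$ we have $(1-b)^2\geq 9/16$, so the last expression is at least $\tfrac{9}{32}C^2 n - \tfrac12 Cn$, which exceeds $\tfrac18 C^2 n$ as soon as $C\geq 4$; the assumption $C\gg 1$ certainly supplies this. The only nontrivial step is the cherry identity in the first display, which rests squarely on $C_4$-freeness and recovers the Kleitman--Winston-style bound the authors allude to; the rest is standard convexity and routine arithmetic, so I do not anticipate a real obstacle.
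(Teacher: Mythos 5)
Your proof is correct and takes essentially the same approach as the paper's own (very terse) argument: bound $\sum_{v\in Z}d_H(v)$ from the minimum-degree hypothesis, count cherries with both endpoints in $Z$, use $C_4$-freeness to identify cherries with edges of $H^2[Z]$, and apply convexity over the $n$ vertices. You spell out the convexity and arithmetic more carefully than the paper does, but there is no substantive difference.
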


\begin{proof}
Observe that $$\sum_{v\in Z} d(v)\ge (1-b)n^{1/2}|Z|> C n/2.$$ Counting cherries (paths of length two), and using that $H$ is $C_4$-free, this means that 
$$e(H^2[Z])> C^2 n/8.  $$
\smartqed
\qed
\end{proof}

\begin{lem}\label{hole}
{ Let $Z \subset X$ with $|Z|=(1+3b)n^{1/2}$, where $b< 1/4$.  Then $e(H^2[Z])> b n$.}
\end{lem}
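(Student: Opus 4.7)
The plan is to mimic the cherry-counting used in the proof of Lemma~\ref{bighole}, but to extract the required linear-in-$b$ bound by using the integrality of degrees rather than a Jensen-type inequality.

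First, for each $z \in V(H)$ I set $d_z := |N(z) \cap Z|$. Since $H$ is $C_4$-free, any two vertices in $Z$ share at most one common neighbour, so
\[
e(H^2[Z]) \;=\; \sum_{z \in V(H)} \binom{d_z}{2}.
\]
Double-counting the edges between $Z$ and $V(H)$, together with $Z \subset X$ and the hypothesis that $d(v) > (1-b)\sqrt{n}$ for every $v \in X$, gives
\[
D \;:=\; \sum_{z \in V(H)} d_z \;=\; \sum_{v \in Z} d(v) \;>\; (1-b)(1+3b)\, n \;=\; (1 + 2b - 3b^2)\, n.
\]

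The key step is that Jensen's inequality, which sufficed in Lemma~\ref{bighole}, is too weak here: it yields $\sum_z \binom{d_z}{2} \geq D^2/(2n) - D/2$, which for $D$ only slightly larger than $n$ is of order $b^2 n$ -- far short of the target $bn$. I will instead use the discrete tangent-line inequality $\binom{d}{2} \geq d - 1$, valid for every integer $d \geq 0$. Summing over all $n$ vertices $z \in V(H)$ then gives
\[
e(H^2[Z]) \;\geq\; \sum_{z \in V(H)} (d_z - 1) \;=\; D - n \;>\; (2b - 3b^2)\, n.
\]
Since $b < 1/4 < 1/3$, one has $(2b - 3b^2) - b = b(1 - 3b) > 0$, so $e(H^2[Z]) > bn$, as required.

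The main obstacle I see is precisely the above point: in the borderline regime $|Z| = (1+3b)\sqrt{n}$, the average $d_z$ is only slightly above $1$, so the Jensen bound picks up only the second-order contribution from $D$ exceeding $n$. The integrality of the $d_z$'s provides a linear gain $\binom{d_z}{2} \geq d_z - 1$ on each term and exactly bridges this gap. Apart from this observation, the argument is a direct cherry count using the $C_4$-freeness of $H$.
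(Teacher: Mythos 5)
Your proof is correct and follows essentially the same route as the paper: a cherry count via $C_4$-freeness giving $e(H^2[Z]) = \sum_{w\in V(H)}\binom{d_Z(w)}{2}$, a double count giving $\sum_w d_Z(w) > (1+b)n$, and then the integrality observation that $\binom{d}{2}\ge d-1$ yields the linear gain (the paper compresses this last step into the terse ``$\ge bn\binom{2}{2}$''). Your write-up is in fact crisper and makes explicit the point --- only implicit in the paper --- that Jensen's inequality alone would be quadratically too weak here.
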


\begin{proof}
Using that $H[Z]$ is $C_4$-free, we know that $H[Z]$ does not span many edges, only  $O(n^{3/4})$.
Observe that $$\sum_{v\in Z} d(v)\ge (1-b)n^{1/2}|Z|> (1+b) n.$$ Counting cherries (paths of length two), and using that $H$ is $C_4$-free, this means that 
$$e(H^2[Z]) = \sum_{v\in V(H)} \binom{d_Z(v)}{2}\geq bn\binom{2}{2}= bn.  $$ 
\smartqed
\qed
\end{proof}
\medskip

Now, we shall choose $F$ as a random subgraph of $H$, keeping each edge with probability $1/t$.
Then $e(F)\le 2 n^{3/2}/t$ w.h.p.
\begin{lem}\label{conditions} (i) { We have w.h.p.~for any $Z\subset X$ of size $(1+3b)n^{1/2}$ that $e(F^2[Z]) > bn/(16t^2)$.\\
(ii)  We have w.h.p.~for any $Z\subset X$ of size $n^{3/5}$, that  $e(F^2[Z])> |Z|^2/( 32t^2).$}
\end{lem}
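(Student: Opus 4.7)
The plan for both parts starts from the same identity: since $H$ is $C_4$-free, every pair of vertices has at most one common neighbour, so $e(F^2[Z])$ equals the number of cherries $(x,v,y)$ in $H$ with $x,y\in Z$ whose two legs $xv,vy$ both lie in $F$, and each such cherry survives with probability $1/t^2$. Different cherries can share edges, so the lower tail is controlled by extracting independence.

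For part (i) the plan is to trade cherries for independence. We construct a collection $\mathcal{M}$ of pairwise edge-disjoint cherries whose centres lie in $V(H)\setminus Z$: for each $v\notin Z$, pick any matching of size $\lfloor d_Z(v)/2\rfloor$ on $N(v)\cap Z$ and turn each matched pair into a cherry centred at $v$. Two such cherries sharing a centre are edge-disjoint by the matching; two with distinct centres $v,v'\in V(H)\setminus Z$ cannot share a leg, because a shared edge $vw=v'w'$ with $w,w'\in Z$ would force $v=v'$. From the min-degree assumption, $\sum_{z\in Z}d(z)\ge(1-b)|Z|\sqrt{n}\ge(1+b)n$, while $2e(H[Z])\le|Z|^{3/2}+2|Z|\le 3n^{3/4}$ by Theorem~\ref{extremal}(i); hence $\sum_{v\notin Z}d_Z(v)\ge(1+b/2)n$ and $|\mathcal{M}|\ge bn/4$ after accounting for the floor. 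A Chernoff bound on the sum of $|\mathcal{M}|$ independent $\mathrm{Bernoulli}(1/t^2)$ variables gives
\[
\Pr\!\Bigl[|\mathcal{M}\cap F^2|\le\tfrac{bn}{16t^2}\Bigr]\le \exp\!\bigl(-cn/\log^{6}n\bigr)
\]
for some absolute $c>0$. Distinct surviving cherries produce distinct edges of $F^2[Z]$ (again by $C_4$-freeness), so this forces $e(F^2[Z])>bn/(16t^2)$, and a union bound over the $\binom{n}{(1+3b)n^{1/2}}\le e^{O(n^{1/2}\log n)}$ choices of $Z$ closes the argument because $n/\log^6 n\gg n^{1/2}\log n$.

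Part (ii) cannot be handled by the same edge-disjoint construction: the analogous count yields only $\Theta(n^{11/10})$ disjoint cherries, whereas the target $|Z|^2/(32t^2)$ demands essentially all of the $\Theta(n^{6/5})$ cherries guaranteed by Lemma~\ref{bighole}. We therefore plan to apply Janson's inequality to $X=e(F^2[Z])=\sum_{c}I_c$, summed over all $Z$-cherries. The expectation is $\mu=e(H^2[Z])/t^2\ge n^{6/5}/(8t^2)$. For the dependency sum $\Delta=\sum_{e\in E(H)}\binom{N(e)}{2}/t^3$, we use $N(e)\le d_Z(u)+d_Z(v)$ for $e=uv$ together with the identity $\sum_v d_Z(v)^2=\sum_{z\in Z}d(z)+2e(H^2[Z])\le|Z|\sqrt{n}+|Z|^2\le 2n^{6/5}$ to obtain $\Delta=O(n^{17/10}/t^3)$. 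Then $\mu^2/\Delta=\Omega(n^{7/10}/t)$, so Janson's lower-tail inequality yields failure probability $\exp(-\Omega(n^{7/10}/\log^{3}n))$, which dominates $\log\binom{n}{n^{3/5}}=O(n^{3/5}\log n)$ since $n^{1/10}\gg\log^{4}n$; the conclusion follows because $\mu/2>|Z|^2/(32t^2)$.

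The main technical obstacle is the tightness of part (i): a direct second-moment or Janson argument applied to the full collection of $\Theta(bn)$ cherries loses polylogarithmic factors against the union bound, because the ratio $\mu^2/\Delta$ scales only as $\sqrt{n}/\mathrm{polylog}(n)$. The edge-disjoint matching trick sidesteps this by purchasing genuine independence at the price of only a universal constant factor in the cherry count.
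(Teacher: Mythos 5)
Your part (i) is essentially the paper's argument: build a family of $\gtrsim bn/4$ pairwise edge-disjoint cherries from a matching of $N(v)\cap Z$ at each centre $v$, note that independence gives a clean Chernoff lower tail $\exp(-\Omega(n/t^2))$, and union-bound over the $\exp(O(n^{1/2}\log n))$ choices of $Z$. That matches the paper line by line (the paper phrases the matching step as ``a degree $d$ contributes $\lfloor d/2\rfloor$ edge-disjoint cherries and the degree sum is $bn$ more than the number of vertices'').

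Part (ii) is where you diverge, and your route has a genuine gap. You apply Janson's inequality to the cherry count, claiming $\Delta = O(n^{17/10}/t^3)$ from $N(e)\le d_Z(u)+d_Z(v)$ together with $\sum_v d_Z(v)^2 \le 2n^{6/5}$. This does not follow. First, your bound $\sum_{z\in Z} d(z) \le |Z|\sqrt n$ presupposes a $\sqrt n$ cap on degrees; nothing in Lemma~\ref{insidecont} forbids vertices of degree as large as $n-1$ in a $C_4$-free graph (a star is $C_4$-free). Second, and more seriously, the relevant quantity is
\[
\Delta = t^{-3}\sum_{e=uv} N(e)\bigl(N(e)-1\bigr) \le t^{-3}\sum_e N(e)^2,
\]
and for an edge $uv$ with, say, $v\in Z$, one has $N(e)\approx d_Z(u)$, so $\sum_e N(e)^2 \approx \sum_u d_Z(u)^3$ — a \emph{cubic}, not quadratic, moment in $d_Z$. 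Bounding a cubic sum by a quadratic sum requires a uniform upper bound on $\max_u d_Z(u)$, and the only a priori bound is $\max_u d_Z(u) \le |Z| = n^{3/5}$. With a single ``hub'' of $Z$-degree $n^{3/5}$ you already get $\sum_u d_Z(u)^3 \ge n^{9/5}$, hence $\Delta \gtrsim n^{9/5}/t^3$, which gives $\mu^2/\Delta \lesssim n^{3/5}/t$ — too small to beat the $\exp(O(n^{3/5}\log n))$ union bound when $t=\log^3 n$.

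The paper avoids Janson entirely: it Chernoff-concentrates the $F$-degree sum $\sum_{z\in Z} d_F(z)$ (a sum of genuinely independent Bernoulli edge indicators) around its mean $\gtrsim Cn/(2t)$, with failure probability $\exp(-\Omega(Cn/t))$ that comfortably dominates the union bound, and then passes deterministically to $e(F^2[Z]) = \sum_v \binom{d_{F,Z}(v)}{2} \gtrsim C^2n/(32t^2)$ by convexity. This sidesteps your hub problem: a hub only \emph{helps} the convexity step, and the concentration is transferred from a quantity that is manifestly a sum of independent indicators. I'd recommend replacing your Janson computation with this degree-sum argument; it is both correct and substantially shorter.
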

\begin{proof}
(i) The number of choices for such $Z$ is at most $2^{O(n^{1/2}\log n)}$. The proof of Lemma~\ref{hole}  yields that for each given $Z$ there are at least $bn/4$ edge-disjoint cherries contributing to $E(H^2[Z])$. The reason is that writing the degree sequence of the vertices in $X-Z$ toward $Z$, a degree $d$ contributes $\lfloor d/2\rfloor$ edge-disjoint cherries and the degree sum is $bn$ more than the number of the vertices.  Hence, the expected number of them in $E(F^2[Z])$ is at least $bn/(8t^2)$. By Chernoff's bound, w.h.p.~for each $Z$ we have $e(F^2[Z]) > bn/(16t^2),$ as the concentration $\exp(-\frac{bn}{100t^2})$ beats the number of choices for $Z$.

(ii) Let $C=C(n)=n^{1/10}$. Choose any $Z\subset X$ of size $Cn^{1/2}$. The number of choices for $Z$ is  at most $2^{O(n^{3/5}\log n)}$. In the graph $F$, the  degree sum of the vertices in $Z$  is w.h.p. (beating the number of choices for $Z$) at least $C n/(4t )$, hence $e(F^2[Z])>   C^2 n / ( 32t^2)= |Z|^2/( 32t^2).$\qed
\end{proof}

From now on  we  fix such an $F$ satisfying the conclusions of Lemma~\ref{conditions}.
Now we construct the family of the container sets $\{C_i\}$ in $F^2$. Fix an independent set   $I$ in $F^2$.  
Our aim is to construct a pair $(T(I),C(T))$ with  $$T(I)\ \subset \ I \ \subset \ T(I)\cup C(T),$$ where we call 
$C:= T(I)\cup C(T)$  the {\it container} containing $I$. The set $T(I)$ is the (small) certificate of the container, as crucially, $C(T)$ depends only on $T$, not on $I$.

We construct the pair $(T(I),C(T))$ algorithmically. First we set $T=\emptyset$ and $A=[n]$, where $A$ is the set of {\it available} vertices. In each step we choose the largest degree vertex, say $v$, in $F^2[A]$. In case there is more than one such vertex we choose the one which comes first in the ordering $\pi$.
If $v\not\in I$, then we just set $A:=A-\{v\}$ and we iterate this step. 

If $v\in I$, then we add $v$ to $T$, and as $I$ is an independent set, we can remove its neighborhood from $A$, i.e. set $A:=A-\{v\} - N(v)$.

We stop this process when $A$ shrinks to a `small' set, and then let $T(I):=T,\  C(T):=A$.

As we build a container for each independent set  $I$ in $H^2[X]$, condition (i) is clearly satisfied.
For checking whether the other conditions hold, we first give an upper bound on $|T|$.
 We always add to $T$ from  $A$  the largest degree vertex of $F^2[A]$. Until $|A|> n^{3/5}$, by Lemma~\ref{conditions} (ii),  in each step we remove at least $n^{3/5}/( 32t^2)$
vertices from $A$. Until $|A|$ is at least $(1+3b)n^{1/2}$, by Lemma~\ref{conditions}~(i), in each step we remove at least always  $bn^{2/5}/(16t^2)$ vertices from $A$. 
Putting together, we have that  $|T|< 32  n^{2/5}t^2+ (16/b)   n^{1/5}t^2  
 < n^{2/5}t^3< b\sqrt n$. When we reach this point, set the container to be the union of $T$ and $A$, where it depends only on $F$, and the number of choices is bounded by the number of choices on $T$.  Now, conditions (ii)-(iv) clearly hold. \qed
\end{proof}

Now we just have to put together the details in order to prove Theorem~\ref{randomC_4}.

\begin{proof}[of Theorem~\ref{randomC_4}]
The certificate for $H$ will be the vector $[Y, F, \{d_j\}_{j=1}^{s}, \{r_j\}_{j=1}^{s}]$, where 
$d_j:= |N(v_j)\cap X|$, and $r_j\le r$ is the index of the container containing $N(v_j)\cap X$, and $s=|Y|.$

Assume we are given the degree sequence, $X,Y$ and $F$. 
For each $v\in Y$ we fix the container of $N(v)\cap X$, noting that $N(v)\cap X$ should be an independent set in $F^2$.
The number of choices for $v$ and the container of the neighbourhood of $v$ in $X$ is at most $2^{n^{7/5}\log^{10} n}$ (this bound is for all $v$ simultaneously). The number of $[X,Y]$  edges to be placed is, by Lemma~\ref{manyedgesbetween}, at least $(1-\gamma) \delta n^{3/2}$, but the number of pairs of vertices where they could be placed is at most $(1+4\gamma) \delta n^{3/2}$. By Chernoff's bound, for $\gamma$ sufficiently small it is unlikely that this could be done in the random graph. The concentration $\exp(-c_{\gamma,\delta,p}n^{3/2})$ clearly beats the bound $\exp(o(n^{3/2}))$ for the number of choices, hence this completes the proof of Theorem~\ref{randomC_4}. 
\qed
\end{proof}

In what follows, we will sketch a second proof of Theorem \ref{randomC_4}. This proof is much easier and gives much better constants than the above proof: unfortunately it only works when $p<\frac{9}{16}$.

\begin{proof}[of Theorem~\ref{randomC_4} when $p<\frac{9}{16}$] 
Let $p<\frac{9}{16}$ be a constant, fix an ordering $\pi$ and a degree sequence, and let $H$ be a $C_4$-free graph with $\left(\frac12-c\right)n^{3/2}$ edges just like in the previous proof. Moreover, for each $i$ we will fix the right-degree $d_i^*$ of $v_i$ - that is, $d_i^*=|N(v_i)\cap\{v_i,\ldots,v_n\}|$. Now instead of fixing containers for the neighbourhood of $v_i$ in $H$, we will fix containers for the \emph{right-neighbourhood}. That is, for each $i$ the container $C_i$ satisfies $N(v_i)\cap\{v_i,\ldots,v_n\}\subset C_i$. Now note that we can make the container $C_i$ have order just barely larger than $\frac{n-i}{d_i^*}$ for all $i$ - the proof of this is similar but easier than Lemma \ref{insidecont} (also see Lemma \ref{rightcontainers}). Now the crucial idea (which was present in the previous proof as well) is that if for many $i$ we have $d_i^*>|C_i|p$ then the number of edges of $H$ is larger than the expected number of places in $G(n,p)$, hence 
 the Chernoff bound implies that embedding of $H$  is unlikely to happen in a random graph.

More precisely, fix $I=\{i\in[n]:d_i^*>\frac{n-i}{d_i^*}p\}$, the set of vertices with too small containers. A simple calculation shows that if for a constant $\epsilon$ we have $c<\frac{3-4\sqrt{p}}{6}(1-\epsilon)$ then the excess degree sum $D=\sum_I\left( d_i^*-\frac{n-i}{d_i^*}p\right)$ will be at least a constant proportion of the total number of edges. 
Then $H$ has at least 
$\sum_I d_i^*$ edges between pairs $\cup_I (i,C_i)$, so
$$ \sum_{i\in I} d_i^*\ge \sum_{i\in I} p\cdot |C_i| + D.$$
It means that $G(n,p)$ on $\cup_I (i,C_i)$ needs to have $D$ more 
edges than the expected number of  edges, which has a low chance by the Chernoff bound.
Indeed, the probability of this happening is at most $e^{-c_\epsilon n^{3/2}}$, hence the concentration beats the number of choices and the proof is complete. For $p=\frac12$ we can take any $c$ less than $\frac{3-2\sqrt{2}}{6}\approx 0.028$.\qed
\end{proof}

Note that the natural conjecture, that the largest $C_4$-free subgraph of $G(n,p)$ has $(1/2+o(1))pn^{3/2}$ edges, is false in general. The following construction was given by Morris--Saxton \cite{morrissaxton}, and provides a counterexample for small $p$.
Take an extremal $C_4$-free graph $G_0$ on $n/2$ vertices and let $G'$ be obtained by blowing up each vertex of $G_0$ to size two and replace every edge by a (not necessa\-rily perfect) matching. Note that every $G'$ obtained in this way is $C_4$-free. 
Now consider $G(n,p)$ and try to count how many edges it has in common with a graph obtained as above. Since
\begin{equation*}
 E(X)\geq \frac12\left(\frac{n}{2}\right)^{3/2} \left(4p(1-p)^3 + 2\left(2p^2(1-p)^2 + 4p^3(1-p) + p^4\right)\right)>\frac{n^{3/2}}{2}p
\end{equation*}
holds for $p<p_0\approx 0.2$, the result follows.

We note that Morris--Saxton \cite{morrissaxton} obtained a result of a very similar flavor to our above result for $p<\frac{9}{16}$. They proved (among others) that if $n^{-1/3}\log^4n\leq p=o(1)$ then the largest $C_4$-free subgraph of the random graph has at most $C\sqrt{p}n^{3/2}$ edges whp. Putting our and their results together, we conclude that if $n^{-1/3}\log^4n\leq p(n)<\frac{9}{16}$ then the largest $C_4$-free subgraph of the random graph has at most $C\sqrt{p}n^{3/2}(1+o(1))$ edges whp, and for constant $p$ we can take $C=\frac{2}{3}$. Moreover, in what follows we will show that for constant $p$, the largest \emph{regular} $C_4$-free graph has at most $\frac{1}{2}\sqrt{p}n^{3/2}$ edges whp.

One might think that a maximum $C_4$-free subgraph of $G(n,p)$ is (close to)  a regular graph - but somewhat surprisingly, if we are looking for the largest regular $C_4$-free subgraph $H$ of $G(n,p)$, then everything is much simpler.
Denote by $d$ the degree of $H$, then the container of each vertex will have size at most $(1+o(1))n/d$, therefore we have to place
$dn/2$ edges (each twice) into $(1+o(1))n^2/(d)$ places, which, after some technical argument which we omit, gives the restriction that $d\le  (1+o(1))\sqrt{pn}$.  \medskip

Above we have seen that looking for the largest \emph{regular} $C_4$-free subgraph of $G(n,p)$ seems much simpler than the general problem. A natural question to ask is, whether there are some other properties (like regularity) whose assumption simplifies the problem.

\begin{svgraybox}
\begin{problems}
Is there a natural extra condition, e.g. some property $\mathcal{P}$, such that the maximum number of   edges of a $C_4$-free subgraph satisfying $\mathcal{P}$ of
$G(n,p)$ could be determined asymptotically?
\end{problems}
\end{svgraybox}

\section{The number of $C_4$-free graphs}
\label{sec:5}

Let $F_n$ be the number of $n$-vertex labelled $C_4$-free graphs. The magnitude of $F_n$ was upper-bounded by Kleitman and Winston \cite{kleitman}. 
Let the constant $\gamma$ be defined as follows:

$$\gamma = \frac{2}{3} \max_{x\in (0,1)} \frac{H(x^2)}{x}\approx 1.081919$$
where $H(y)=-y\log_2 y-(1-y)\log_2 (1-y)$ is the binary entropy function, and let $c^*\approx 0.49$ be the constant satisfying the following equality:

$$\gamma = \frac{2}{3} \frac{H\left((c^*)^2\right)}{c^*}.$$

\begin{thm}\cite{kleitman}
 The number of $n$-vertex labelled $C_4$-free graphs satisfies
$$\log_2 F_n \leq (1+o(1))\gamma n^{3/2}.$$

\end{thm}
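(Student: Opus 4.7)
The plan is to follow the Kleitman--Winston min-degree plus certificate template, which is exactly the framework driving the argument of Section~\ref{sec:4}. Given a $C_4$-free graph $H$ on $[n]$, associate to it its canonical min-degree ordering $v_1, \ldots, v_n$ (ties broken by a fixed linear order $\pi$ of $[n]$), and write $d_i^+ := |N(v_i) \cap \{v_{i+1}, \ldots, v_n\}|$. Theorem~\ref{extremal}(i) applied to the $C_4$-free graph $H[\{v_i, \ldots, v_n\}]$, combined with the min-degree property of $v_i$, yields $d_i^+ \le (1+o(1))\sqrt{m_i}$, where $m_i := n-i+1$.

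I would then encode $H$ sequentially: at step $i$, first reveal $d_i^+$ (a total cost of $O(n\log n) = o(n^{3/2})$ bits), and then specify $N^+(v_i) := N(v_i) \cap \{v_{i+1}, \ldots, v_n\}$ as a subset of a container $\mathcal{C}_i \subseteq \{v_{i+1}, \ldots, v_n\}$ depending only on information from prior steps. Because $H$ is $C_4$-free, $N^+(v_i)$ is an independent set in the auxiliary graph $J_i$ on $\{v_{i+1}, \ldots, v_n\}$ whose edges are the pairs sharing a common neighbour in $\{v_1, \ldots, v_{i-1}\}$; as $J_i$ is determined by data already revealed, a greedy Kleitman--Winston container procedure on $J_i$, in the spirit of Lemma~\ref{insidecont}, would produce $\mathcal{C}_i$ with $|\mathcal{C}_i| \le (1+o(1))\, m_i / d_i^+$.

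Setting $d_i^+ = x_i \sqrt{m_i}$ with $x_i \in [0,1]$, the standard entropy estimate $\binom{|\mathcal{C}_i|}{d_i^+} \le 2^{H(x_i^2)\sqrt{m_i}/x_i}$ combined with $\sum_{i=1}^n \sqrt{m_i} \le (2/3 + o(1))n^{3/2}$ yields
$$\log_2 F_n \;\le\; o(n^{3/2}) \,+\, \max_{x \in (0,1)} \frac{H(x^2)}{x} \cdot \sum_{i=1}^n \sqrt{m_i} \;\le\; (1+o(1))\,\gamma\, n^{3/2},$$
with the optimum attained at $x = c^*$, as required.

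The main obstacle is justifying the container bound $|\mathcal{C}_i| \le (1+o(1))\, m_i / d_i^+$. For small $i$ the graph $J_i$ is sparse and this bound fails, but there the trivial estimate $|\mathcal{C}_i| \le m_i$ contributes only $o(n^{3/2})$ to $\log_2 F_n$ and is therefore harmless. In the tight regime $i = \Theta(n)$, the key observation is that each $v_j$ with $j < i$ of forward-degree $d_j^+$ contributes $\binom{d_j^+}{2}$ pairwise-distinct edges to $J_i$ (distinctness following from the $C_4$-freeness of $H$), making $J_i$ dense enough that a greedy min-degree algorithm shrinks the available set at the required rate. A careful choice of stopping time for this algorithm, analogous to the termination analysis inside the proof of Lemma~\ref{insidecont}, delivers the desired bound.
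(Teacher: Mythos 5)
You are right that the paper only cites this theorem from [kleitman] without reproving it, and your reconstruction does follow the Kleitman--Winston template that Section~\ref{sec:5} of the paper builds upon: a min-degree ordering, a sequential encoding via forward-degree and forward-neighbourhood, a container for each $N^+(v_i)$ of size about $m_i/d_i^+$, and the entropy calculation with $\sum_i\sqrt{m_i}\sim\frac23 n^{3/2}$ giving $\gamma$. This is the right skeleton, and your identification of the container bound $|\mathcal{C}_i|\le(1+o(1))m_i/d_i^+$ as the main obstacle is correct.

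However, the justification you sketch for that bound has a genuine gap. First, the global count $\sum_{j<i}\binom{d_j^+}{2}$ overcounts what lands in $J_i$: a pair of forward neighbours of $v_j$ only contributes a $J_i$-edge if both endpoints lie in $\{v_{i+1},\dots,v_n\}$, and for $i=\Theta(n)$ a large fraction of $v_j$'s forward neighbours may sit in $\{v_{j+1},\dots,v_i\}$. Second, and more fundamentally, the Kleitman--Winston greedy needs \emph{local} density -- that every subset $Z\subset\{v_{i+1},\dots,v_n\}$ of size bigger than roughly $m_i/d_i^+$ induces many edges in the auxiliary graph -- and the min-degree property only guarantees that each $v\in G_i$ has $d_{G_i}(v)\ge d_i^+$, i.e.\ it controls degrees toward the \emph{future} part $\{v_{i+1},\dots,v_n\}$, whereas your $J_i$ is built from common neighbours in the \emph{past} $\{v_1,\dots,v_{i-1}\}$, whose degrees into $\{v_{i+1},\dots,v_n\}$ the min-degree ordering does not directly bound. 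The way KW (and the paper's Lemma~\ref{insidecont} and Lemma~\ref{rightcontainers}) close this is by augmenting the certificate with an auxiliary object -- a sparse subgraph $F\subset G$, or equivalently extra fingerprint vertices -- whose proper square $F^2$ is computable from the certificate and is dense on every large subset precisely because of the min-degree bound in $G_i$. Once you replace your $J_i$ by such a certified sparse square, your calculation goes through and gives the stated constant $\gamma$.
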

Our main result in this section is the improvement of their constant by a tiny amount.
\begin{svgraybox}
\begin{thm} \label{kleitmanconstantimprovement}There exists a $\delta>0$ such that 
$$\log_2 F_n \leq (1+o(1))(\gamma-\delta)n^{3/2}.$$
\end{thm}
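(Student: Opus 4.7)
The plan is to enhance Kleitman--Winston's encoding by combining a stability argument with the container tools used in Section~\ref{sec:3} and Section~\ref{sec:4}. Fix a min-degree ordering $v_1, \ldots, v_n$ of $V(H)$ and set $d_i^* = |N_H(v_i) \cap \{v_{i+1}, \ldots, v_n\}|$ and $x_i = d_i^*/\sqrt{n-i}$. The Kleitman--Winston procedure, at each step $i$, produces a container $C_i \subset \{v_{i+1}, \ldots, v_n\}$ of size at most $(1+o(1))\sqrt{n-i}/x_i$, depending only on the already-exposed data, with $N^+(v_i) \subset C_i$. Using $\log_2 \binom{N}{k} \leq N\,H(k/N)$, this yields $\log_2 F_n \leq \sum_i (\sqrt{n-i}/x_i)\,H(x_i^2) + o(n^{3/2}) \leq \gamma n^{3/2} + o(n^{3/2})$, with the worst case attained when every $x_i$ equals $c^*$.

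\textbf{Stability dichotomy.} Set $f(x) = H(x^2)/x$; its unique maximum on $(0,1]$ is at $c^*$ and $f$ is strictly concave there, so there exist constants $\eta_0, \eta_1, \epsilon_0 > 0$ with $f(c^*) - f(x) \geq \eta_0 (x-c^*)^2$ for $|x-c^*| \leq \epsilon_0$ and $f(c^*) - f(x) \geq \eta_1$ for $|x - c^*| > \epsilon_0$. If a positive fraction of the indices $i \leq n/2$ satisfy $|x_i - c^*| \geq \epsilon_0$, then summing the extra slack over these indices shows that the Kleitman--Winston bound above already loses $\Omega(n^{3/2})$ from the extremum $\gamma n^{3/2}$, and we are done. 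Hence we may assume $x_i = c^* + o(1)$ for all but $o(n)$ indices $i \in [n/2]$, forcing the near-extremal degree sequence $d_i^* \approx c^*\sqrt{n-i}$ on most of the range, and in particular $d_i^*(H[\{v_i, \ldots, v_n\}]) = (c^* + o(1))\sqrt{n-i}$.

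\textbf{Container-based improvement in the near-extremal regime.} In this regime, the vertices $v_1, \ldots, v_{\alpha n}$ (for a suitably small constant $\alpha$ chosen at the end) contribute $\sum_{i \leq \alpha n} \binom{d_i^*}{2} = \Theta(\alpha n^2)$ extra cherries, landing in $X := \{v_{\alpha n+1}, \ldots, v_n\}$, which enrich $H^2[X]$ beyond what the within-$X$ cherry count alone gives. Borrowing the two-phase encoding idea from Section~\ref{sec:3}, we now re-encode $H$ as follows. In phase~(a) we fix $H[Y]$ and the bipartite graph $H[X,Y]$ (where $Y := V(H) \setminus X$): the former costs $\gamma |Y|^{3/2} = O(\alpha^{3/2} n^{3/2})$ by Kleitman--Winston, and for the latter we exploit that $N_H(v) \cap X$ is an independent set in $H^2[X]$ (since $H$ is $C_4$-free) and record each such set inside a container of size $(1+o(1))\sqrt{n}/c^*$ produced by a Lemma~\ref{insidecont}-style application; this costs $\sum_{v \in Y} \sqrt{n} \cdot H(c^{*2}) /c^* + o(n^{3/2}) = (\gamma_{\text{bip}} + o(1))\alpha n^{3/2}$ bits, for an explicit constant $\gamma_{\text{bip}}$. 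In phase~(b), we run the Kleitman--Winston process on $H[X]$, but now inside the enriched $H^2[X]$ that already incorporates the phase~(a) cherries. The increased edge density shrinks each container $C_i$ by a multiplicative factor $\kappa = \kappa(\alpha) < 1$, and propagating this shrinkage through $\sum_i \kappa (\sqrt{n-i}/x_i) H(x_i^2/\kappa)$ yields a reduction of order $(1-\kappa)\gamma(1-\alpha)^{3/2} n^{3/2}$ in the Kleitman--Winston contribution for $H[X]$, compared to running it naively.

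\textbf{Main obstacle.} The proof reduces to verifying that, for some small but constant $\alpha > 0$, the phase~(b) savings $(1-\kappa(\alpha))\gamma(1-\alpha)^{3/2}$ strictly exceed the phase~(a) cost $\gamma_{\text{bip}}\,\alpha + \gamma\alpha^{3/2}$. The main technical difficulty is making the cherry-enrichment quantitative: one must check, via a convexity argument in the spirit of Lemma~\ref{2ndthm}, that the $\Theta(\alpha n^2)$ cherries through $Y$ really translate into a constant-factor density boost of $H^2[X]$ and thereby into $\kappa(\alpha) \leq 1 - c\alpha$ with $c$ large enough to dominate $\gamma_{\text{bip}}$ at $\alpha \to 0^+$. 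This is precisely where the balance between $c$ and $\gamma_{\text{bip}}$ is tight, and it is the reason why the resulting $\delta$ is, as the authors forewarn, ``tiny.''
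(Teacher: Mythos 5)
Your proposal and the paper's proof share the same opening move: establish the near-extremal trichotomy (if a positive fraction of vertices have $d_i^*$ far from $c^*\sqrt{n-i}$, the ordinary Kleitman--Winston sum already saves $\Omega(n^{3/2})$, so assume $d_i^*\approx c^*\sqrt{n-i}$ for most $i$), and both then exploit the tension this creates: the right-degree sum forces $e(H)\approx\frac{2}{3}c^*n^{3/2}$, hence average degree $\frac{4}{3}c^*\sqrt{n}$, well above the local right-degree of $c^*\sqrt{n-i}$. But from there the two arguments diverge in the key idea. The paper's mechanism is entirely different from yours: it strengthens the container construction (Lemma~\ref{rightcontainers}) so that each container has bounded \emph{degree measure} $\mu_i(C_i)\le(1+\epsilon^2)(n-i+1)$, then shows (Proposition~\ref{fewlarge}, Lemmas~\ref{otherfewlarge} and~\ref{noversevenlarge}) that containers of non-win vertices can hold only $O(\sqrt{\epsilon(n-i)})$ ``alive'' high-degree vertices, while the many degree-surplus vertices are forced into $\gg\sqrt{\epsilon}\,n^{3/2}$ container slots. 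Double-counting an auxiliary bipartite incidence then yields a contradiction that forces $\ge\epsilon n$ wins. There is no bipartition into blocks $X,Y$, no two-phase encoding, and no appeal to cherry enrichment of $H^2[X]$.

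Your cherry-enrichment route, beyond being different, has a gap I think is substantial, not merely bookkeeping. Your Phase~(b) savings depend on showing $\kappa(\alpha)\le 1-c\alpha$ uniformly along the ordering. But the $Y$-cherries $\sum_{y\in Y}\binom{|N(y)\cap X|}{2}$ need not be spread evenly over $X$: the adversarial case is that the $X\!-\!Y$ edges concentrate on an $o(1)$-fraction of $X$, so for the bulk of the $H[X]$ Kleitman--Winston process the enriched square $H^2[X]$ is no denser than $H[X]^2$ on the relevant remaining vertex set $V_i$, and $\kappa\approx 1$ there. Because both the Phase~(a) cost and the Phase~(b) gain are linear in $\alpha$ to leading order, this concentration can wipe out the entire gain; making the boost uniform would require exactly the kind of convexity/degree-measure control that is the paper's main innovation (and that you gesture at via Lemma~\ref{2ndthm}) but never supply. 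There is also an ordering wrinkle you leave implicit: to build Phase~(a) containers for $N(v)\cap X$, $v\in Y$, inside $H^2[X]$, the encoder must already know something about $H[X]$, so you must first expose a fingerprint $F\subset H[X]$ as in Lemma~\ref{insidecont}, and then carefully argue that the thinned $F^2$ combined with the (unthinned) $Y$-cherries still yields the claimed shrinkage; the asymmetry between the sparse $F^2$ (density $\sim 1/t^2$) and the full $Y$-cherries is another place where a quantitative bound is needed but absent. So: right trichotomy, right intuition about the surplus degree, genuinely different and unproven exploitation of it.
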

\end{svgraybox}

We make no effort to optimize the value of $\delta$ - we suspect $\delta = 2^{-100}$ is small enough to make the proof work. The improvement in our theorem comes from insisting that our containers not only have few vertices in them, but also have not too large degree measure, as described below. The remainder of this section is devoted to proving Theorem \ref{kleitmanconstantimprovement}.

Fix an ordering $v_1,\ldots, v_n$ such that $v_i$ has minimum degree in $G_i=G[v_i,\ldots, v_n]$, write $d_i$ for the degree of $v_i$ in $G$ and write $d_i^*$ for the degree of $v_i$ in $G_i$ (the ``right-degree" of $v_i$). 

Our approach is similar to that of Kleitman--Winston \cite{kleitman}, and to the approach in the previous section. Given this ordering, and degree sequences, we find a small container $C_i$ for each vertex $v_i$, with the following properties: \\(i) $C_i$ should contain $N(v_i)\cap \{v_i,\ldots,v_n\}$.\\ (ii) The number of choices should be small, when we consider different graphs having the same vertex ordering and degree sequences.  

If for every $i$, both $C_i$ and the number of choices for $C_i$ are small, then we could obtain an upper bound for the number of choices for $N(v_i)\cap \{v_i,\ldots,v_n\}$, yielding an upper bound for the number $C_4$-free graphs. The number of choices for $N(v_i)\cap \{v_i,\ldots,v_n\}$ is $\binom{|C_i|}{d_i^*}$, therefore we improve on the upper bound if $d_i^*$ is smaller than it should be. If for most $i$ it is not smaller, then the degree measure, and hence the size, of an average container is smaller, yielding again an improvement.

Our additional idea is that for every $G_n$, it cannot be that all the containers have the largest possible sizes. In the first half of the proof, we describe containers of vertices in a fixed graph $G_n$, so every vertex has only one container, containing its neighborhood.

Let $\epsilon$ be a very small fixed constant. Fix an ordering as above, a degree-sequence and a right-degree-sequence (at most $n^{n}$ choices each). Also, for each vertex $v_i$ fix a container $C_i$ for its neighbourhood, and in what follows we will show that one can create the containers such that for $i<(1-\epsilon)n$, the degree measure of  $C_i$ in $G_i$ is at most 
\begin{equation}\label{degmeas}
\mu_i(C_i)=\sum_{v\in C_i}d_{G_i}(v)\leq(1+\epsilon^{2})(n-i+1).
\end{equation}

\begin{lem}\label{rightcontainers}
Let $G$ be a $C_4$-free graph with degree sequence and the ordering of its vertices given, as above. Then for each $i=1,\ldots, (1-\epsilon)n$ we can fix a container $C_i$ containing $N(v_i)\cap \{v_i,\ldots,v_n\}$ such that we have $\sum_{v\in C_i} d_{G_i}(v)\leq (1+\epsilon^2) (n-i+1)$.
\end{lem}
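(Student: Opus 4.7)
The plan is to build $C_i$ by a Kleitman--Winston style peeling algorithm run inside $G_{i+1}$. Initialise $A := V(G_{i+1}) = \{v_{i+1},\dots,v_n\}$ and $T := \emptyset$. While $A$ contains a vertex of positive $G_i$-degree, pick such a vertex $v$ of largest $d_{G_i}(v)$ (ties broken by $\pi$); if $v \in N(v_i)$, add $v$ to $T$ and then delete from $A$ both $v$ itself and every vertex of $A$ that has a common $G_{i+1}$-neighbour with $v$; otherwise simply delete $v$ from $A$. Finally, output $C_i := T \cup A$. Each iteration strictly shrinks $A$, so the algorithm terminates.

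I claim $N(v_i) \cap V(G_{i+1}) \subseteq C_i$. The crucial point is that, since $G$ is $C_4$-free, any two distinct $u, w \in N(v_i)$ have $v_i$ as their \emph{unique} common neighbour in $G$, and hence share no common neighbour in $G_{i+1}$. Therefore, when we add some $v \in N(v_i)$ to $T$ and delete from $A$ all vertices sharing a $G_{i+1}$-neighbour with $v$, no other element of $N(v_i)$ is deleted. A vertex $u \in N(v_i) \cap V(G_{i+1})$ cannot be removed via the ``else'' branch (since $u \in N(v_i)$), and cannot remain in $A$ at termination (since $d_{G_i}(u) \geq 1$); so $u$ must have been added to $T \subseteq C_i$.

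It remains to bound $\sum_{v \in C_i} d_{G_i}(v)$. Write $m := n-i+1$. The min-degree ordering combined with Theorem~\ref{extremal}(i) yields $d_i^* = d_{G_i}(v_i) \leq 2e(G_i)/m \leq \sqrt{m}+2$, so $|T| \leq d_i^* \leq \sqrt{m}+2$. The disjointness observation above also makes $\{N_{G_{i+1}}(v) : v \in T\}$ pairwise disjoint subsets of $V(G_{i+1})$, so $\sum_{v \in T} d_{G_{i+1}}(v) \leq m-1$, and since each $v \in T$ is adjacent to $v_i$,
$$\sum_{v \in T} d_{G_i}(v) \;=\; \sum_{v \in T} \bigl( d_{G_{i+1}}(v) + 1 \bigr) \;\leq\; (m-1) + |T| \;\leq\; m + \sqrt{m} + 1.$$
The halting rule forces every $v$ remaining in $A$ to be $G_i$-isolated, so $\sum_{v \in A} d_{G_i}(v) = 0$. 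Adding these contributions yields $\sum_{v \in C_i} d_{G_i}(v) \leq m + \sqrt{m} + 1 \leq (1+\epsilon^2)m$, which holds whenever $m \geq C/\epsilon^{4}$ for an absolute constant $C$; since $i \leq (1-\epsilon)n$ gives $m \geq \epsilon n$, this is true for all sufficiently large $n$.

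The only delicate step is the ``no neighbour of $v_i$ is accidentally deleted'' check, and this is forced by $C_4$-freeness in exactly the form used above. I expect that the additional difficulty in the proof of Theorem~\ref{kleitmanconstantimprovement} will not lie in this Lemma, but rather in bounding the total \emph{number} of distinct containers arising across all $C_4$-free $G$'s, for which one would tweak the peeling order (for instance, decreasing \emph{current} degree in the auxiliary graph $G_{i+1}^2[A]$) so that the transcript $T$ alone determines $C_i$; for the Lemma as stated only the existence of a single good container per vertex is needed.
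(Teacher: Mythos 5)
Your algorithm does produce a set $C_i$ containing $N(v_i)\cap\{v_i,\ldots,v_n\}$ with the required degree measure, and the $C_4$-freeness observation (distinct neighbours of $v_i$ share no common $G_{i+1}$-neighbour) is correctly used. But look at what your container actually \emph{is}: every element of $T$ lies in $N(v_i)$, every $u\in N(v_i)\cap V(G_{i+1})$ ends up in $T$, and every vertex left in $A$ at termination is $G_i$-isolated and hence cannot lie in $N(v_i)$. So $T = N(v_i)\cap V(G_{i+1})$ exactly, and $C_i = N(v_i)\cup(\text{some }G_i\text{-isolated vertices})$. For the literal sentence in the lemma this is fine --- indeed, taking $C_i = N(v_i)\cap\{v_i,\ldots,v_n\}$ directly already satisfies the degree-measure bound by the same disjoint-neighbourhoods argument --- but this misses the entire point of the lemma.

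The lemma is a container statement, and its role in the proof of Theorem~\ref{kleitmanconstantimprovement} is to bound, over all $C_4$-free graphs with the given ordering and degree sequences, the number of distinct sets $C_i$ that can arise; that is exactly the pre-announced property (ii) ``the number of choices should be small.'' A container equal to $N(v_i)$ itself carries no compression: it has as many instances as there are possible right-neighbourhoods, and downstream in the proof, fixing $C_i$ and then asking how many ways $N(v_i)$ can sit inside $C_i$ becomes vacuous, so the $\binom{|C_i|}{d_i^*}$ counting on which the whole entropy bound rests collapses. The paper's proof of this lemma therefore does something genuinely different: it runs a Kleitman--Winston fingerprinting procedure (as in Lemma~\ref{insidecont}) that removes a \emph{large} number of vertices per fingerprint step --- at least $m^{3/5}/(100\log^4 m)$ while $|A|\ge m^{3/5}$, then at least $m^{2/5}/(100\log^4 m)$ until $|A|\approx 3\sqrt m$ --- so the certificate $T$ has size $O(m^{2/5}\log^{15}n)$, and $C_i = T\cup A$ is a deterministic function of $T$ (not of $N(v_i)$). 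The ``final touch'' then adds $O(1)$ extra fingerprint vertices to push the degree measure below $(1+\epsilon^2)m$. Your remark at the end concedes this bookkeeping would require a different peeling order, but files it under ``additional difficulty in the theorem'' rather than in the lemma; in fact it is the substance of the paper's proof of Lemma~\ref{rightcontainers}, and without it the lemma does no work.
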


\begin{proof}
The proof is very similar to the one given by Kleitman--Winston \cite{kleitman} and Lemma \ref{insidecont}, so we only give a sketch here. Set $m=n-i+1$. If $d^*_i<\sqrt{m}/\log^2m$ then the number of choices for $N(v_i)\cap \{v_i,\ldots,v_n\}$ is at most $2^{O(\sqrt{m}/\log m)}$. So assume $d^*_i\geq \sqrt{m}/\log^2m$. Now we construct a fingerprint in the exact same way as in Lemma \ref{insidecont}. As long as the set of available vertices has order at least $m^{3/5}$ we always remove at least $m^{3/5}/(100\log^4m)$ vertices. Then until we reach order $3\sqrt{m}$ we always remove at least $m^{2/5}/(100\log^4m)$ vertices. For the final touch, notice that if $\sum_{v\in C_i}d_{G_i}(v)>(1+\epsilon^2)m$ then $|E(G^2[C_i])|>\epsilon^2 m$ as in Lemma \ref{hole}, and in $G^2[C_i]$ every vertex in the fingerprint $T$ has degree zero. Hence we can always find an available vertex of degree at least $\epsilon^2 \sqrt{m}/2$ and add it to our fingerprint $T$. But  we can only add $O(1)$ vertices to the fingerprint this way, as $|C_i| = O(\sqrt{n})$.  As before, the order of the certificate of the  container will be at most $m^{2/5}t^5$ and the degree measure of the container will be as required.
 \qed
\end{proof}

\begin{defi}
The vertex $v_i$ is  \textbf{win}, if at least one of the following two conditions hold:
\begin{itemize}
\item $|d_i^*-c^*\sqrt{n-i+1}|>\epsilon \sqrt{n-i+1}$.
\item The container of $v_i$ has order at most $(1-\epsilon^{2})(n-i+1)/(c^*\sqrt{n-i+1})=\frac{1-\epsilon^2}{c^*}\sqrt{n-i+1}$.
\end{itemize}
Denote the set of win vertices by $W$.
\end{defi}

Recall that in the original proof of Kleitman--Winston \cite{kleitman} we add vertices one by one, according to the ordering given above. The final bound came from noting that in the worst case scenario we have $2^{\frac{H((c^*)^2)}{c^*}\sqrt{n-i}}$ choices for the neighbourhood of $v_i$.

Note that if we have at least $\epsilon n$ wins then we are done. Indeed, if $v_i$ is a win vertex then following the original proof of Kleitman--Winston, the number of choices for its neighbourhood contributes at most $\frac{H((c^*\pm \epsilon)^2)}{c^*\pm \epsilon}$ to the final sum in the exponent, which is strictly smaller than $\frac{H((c^*)^2)}{c^*}$. A linear number of win vertices then gives us a constant factor improvement in the final bound. Hence from now on we will assume that we have less than $\epsilon n$ wins, and derive a contradiction. 

In what follows we will use the two notions \emph{right-degree} and \emph{degree} quite frequently. Recall that the former always means $d^*(v_i)=|N(v_i)\cap \{v_{i+1},\ldots,v_n\}|$, and the latter is the usual degree in the whole graph, denoted by $d(v)$ or $d_v$; unless otherwise specified, the word `degree' will always mean the latter.

We sketch the proof of Theorem \ref{kleitmanconstantimprovement}, then fill in the details.
The first observation is that we need to have at least \emph{some} win vertices. Indeed, if none of the vertices were win, then every vertex $v_i$ would have right-degree (roughly) $c^*\sqrt{n-i}$. Hence the total number of edges in the graph would be $\frac{2}{3}c^*n^{3/2}$. But since the vertices early on in the ordering have small degrees ($d(v_i)\approx c^*\sqrt{n}$ for $i$ small), this means that some later vertices would have much large degrees, i.e. $d(v_i)\geq \frac{4}{3}c^*\sqrt{n}$ for some large values of $i$. Now suppose that the first vertex has a container only containing such large degree vertices! Then the degree measure of the container is a factor of $4/3$ bigger than allowed. Indeed, the key idea to our proof will be that a container can only contain very few of these large-degree vertices.

To exploit this observation we will partition our vertices in a few classes, according to the magnitude of their left- and right-degrees. If $d_v\approx c^*\sqrt{n}$ then we are dealing with a nice, everyday, average vertex. We cannot use these to derive contradictions of any sort. But as noted above, the total number of edges in our graph cannot come from these normal vertices only! So we need to have at least \emph{some} larger degree vertices. But why cannot we have, say, $\sqrt{n}$ vertices of degree close to $n$? This brings us to our next crucial observation.

Suppose $v_i$ is a vertex at position $i$ in our ordering, and it has a \emph{huge} degree (the exact threshold for being  `huge' will be specified later). Since our graph is $C_4$-free, the right-degree of this vertex is at most $\sqrt{n-i}+1$. Hence it will have quite large left-degree, meaning that it has to be contained in many containers! (This is because if $u_iu_j$ is an edge of the graph with $i<j$ then $u_j$ has to be in the container of $u_i$.) However, as mentioned before, a container can only contain very few such large-degree vertices. Combining these ideas will tell us that even though there is a huge amount of ``excess degree" we have to distribute among our vertices, we cannot get too many huge ones. This in turn will imply that there is a linear proportion (at least $1/7$) of vertices which have large degree.

The finishing blow will come by repeating the same procedure as above. A large degree vertex, if not a win vertex, needs to be contained in many containers. But again, a container can contain only a few large degree vertices to not violate the degree measure condition, which will give us the long sought contradiction that finishes the proof. This bound on the degree measure is the essential ingredient, the mysterious heroine that will make an appearance multiple times throughout our proof and makes all our estimates work smoothly.

With this overview in hand, we now make all above ideas and definitions precise, and prove Theorem \ref{kleitmanconstantimprovement}.

\begin{defi}
A vertex $v$ is \textbf{large} if $d_v>(1+30\sqrt{\epsilon})c^*\sqrt{n}$. Denote the set of large vertices by $L$. A vertex $v$ is \textbf{huge} if $d_v>\sqrt{n}$. Denote the set of huge vertices by $H$. For $i\in [n]$, say a vertex $v_k$ with $k>i$ is \textbf{$i$-alive} if $d(v_k,G_i)>(1+10\sqrt{\epsilon})c^*\sqrt{n-i+1}$, where $d(v_k,G_i)$ is the degree of $v_k$ in $G_i$. Denote the set of $i$-alive vertices by $L_i$. Note that $H\subset L\subset L_1$.
\end{defi}

Since the graph is $C_4$-free, we have by Theorem \ref{extremal} (i) that for all $i$,
\begin{equation}\label{maxdegree}
d_i^*\leq \sqrt{n-i+1}+1.
\end{equation}

By the definitions, we have that 
\begin{equation}\label{edges}
2|E(G)|=\sum d_v = 2\sum d^*_v \geq 2 \sum_{i=1}^{(1-\epsilon)n} \left( c^*\sqrt{i}-\epsilon\sqrt{n} \right)\geq \frac{4}{3}c^*(1-5\epsilon)n^{3/2},
\end{equation}
where the first inequality comes from excluding the at most $\epsilon n$ win vertices from our sum. Next, we make the idea ``a container can only contain very few large degree vertices" precise.

\begin{prop}\label{fewlarge}

Let $i<(1-\epsilon)n$. The container $C_i$ of a non-win vertex $v_i$ contains at most $\sqrt{\epsilon (n-i+1)}$ vertices that are $i$-alive.

\end{prop}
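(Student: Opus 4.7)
The plan is to combine three ingredients: the degree-measure bound from Lemma~\ref{rightcontainers}, the min-degree property of the ordering (which forces every vertex of $G_i$, and in particular every $v\in C_i$, to satisfy $d_{G_i}(v)\geq d_i^*$), and the two halves of the non-win hypothesis on $v_i$.

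I would set $m:=n-i+1$, let $a$ denote the number of $i$-alive vertices in $C_i$, and write $|C_i|=\beta\sqrt{m}$. Since $v_i$ is not win, we have $\beta > (1-\epsilon^2)/c^*$ and $d_i^*\geq (c^*-\epsilon)\sqrt{m}$. The master inequality is the degree-measure bound \eqref{degmeas}, namely $\sum_{v\in C_i} d_{G_i}(v)\leq (1+\epsilon^2)m$. I would split the left-hand sum according to whether $v$ is $i$-alive: each $i$-alive vertex contributes at least $(1+10\sqrt{\epsilon})c^*\sqrt{m}$ by definition, and each remaining vertex contributes at least $d_i^*\geq (c^*-\epsilon)\sqrt{m}$ by the min-degree property of the ordering.

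Substituting these two lower bounds gives
$$a(1+10\sqrt{\epsilon})c^*\sqrt{m} + (\beta\sqrt{m}-a)(c^*-\epsilon)\sqrt{m} \leq (1+\epsilon^2)m.$$
Collecting terms, the coefficient of $a\sqrt{m}$ on the left becomes $10c^*\sqrt{\epsilon}+\epsilon$, while using $\beta(c^*-\epsilon)\geq (1-\epsilon^2)(1-\epsilon/c^*)$ the residual on the right is at most $m\bigl(\epsilon/c^* + 2\epsilon^2\bigr)$. Dividing through yields $a\leq \sqrt{\epsilon m}/(10(c^*)^2)$ up to a term of order $\epsilon^{3/2}\sqrt{m}$; since $(c^*)^2\approx 0.24$, the leading constant is roughly $0.42$, so $a\leq \sqrt{\epsilon m}$ holds comfortably for small $\epsilon$.

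The main delicacy is the tight coupling between the two halves of the non-win definition. The container-size lower bound $\beta\geq (1-\epsilon^2)/c^*$ is precisely what cancels the $c^*$ in $d_i^*\geq (c^*-\epsilon)\sqrt{m}$, so that the product $\beta(c^*-\epsilon)$ equals $1$ up to an $O(\epsilon)$ defect; this slack is then absorbed by the $\Theta(\sqrt{\epsilon})$ gap that the factor $(1+10\sqrt{\epsilon})$ in the definition of $i$-alive creates between alive and non-alive contributions to the degree measure. If either half of the non-win assumption were dropped the slack would be of constant order and no non-trivial bound on $a$ would follow, which is the combinatorial reason both clauses appear in the definition of a win vertex in the first place.
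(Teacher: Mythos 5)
Your proof is correct and follows the same approach as the paper: split the degree-measure sum $\mu_i(C_i)$ into $i$-alive and non-$i$-alive contributions, lower-bound the former by the aliveness threshold and the latter by the min-degree property $d_{G_i}(v)\ge d_i^*$, and invoke both clauses of the non-win hypothesis to force the non-alive contribution alone to nearly saturate the $(1+\epsilon^2)m$ budget. The paper phrases this as a contradiction (assume $a\ge\sqrt{\epsilon m}$ and exhibit $\mu_i(C_i)\ge(1+2\epsilon)m$), while you solve the same inequality directly for $a$, obtaining the slightly sharper bound $a\lesssim\sqrt{\epsilon m}/(10(c^*)^2)$; the underlying argument is identical.
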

\begin{proof}
Let $m=n-i+1$. Suppose $C_i$ contains at least $\sqrt{\epsilon m}$ vertices that are $i-$alive. Since the order of the container is at least $(1-\epsilon^{2})\sqrt{m}/c^*$ (by definition of a win vertex), the degree measure of the container $C_i$ in $G_i$ is at least 
\begin{equation}
\begin{split}
\mu_i (C_i)  \geq \sqrt{\epsilon m} (1+10\sqrt{\epsilon})c^*\sqrt{m} + \left(\frac{1-\epsilon^{2}}{c^*} -\sqrt{\epsilon}\right) \sqrt{m} (c^*-\epsilon)\sqrt{m}  \geq m\left(  2\epsilon + 1\right).
\end{split}
\end{equation}
Here we used that $v_i$ is the minimum degree vertex in $G_i$, and that $c^*\approx 0.49$. This contradicts the constraint (\ref{degmeas}) on the degree measure of the containers.\qed
\end{proof}

We also have a corresponding upper bound for all other vertices:

\begin{lem}\label{otherfewlarge}
For every $i$, the container $C_i$ contains at most $10\sqrt{n}$ vertices that are $i$-alive.
\end{lem}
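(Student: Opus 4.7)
The plan is to adapt the argument of Proposition~\ref{fewlarge} verbatim, but drop the non-win hypothesis on $v_i$. In Proposition~\ref{fewlarge} that hypothesis was used only to get a lower bound on $|C_i|$, and hence to account for the contribution of the small-degree (``non-alive'') portion of the container; for the present much looser bound $10\sqrt n$ we can afford to ignore that term entirely and simply use~\eqref{degmeas} as a black-box upper bound on $\mu_i(C_i)$.

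Concretely, I would argue by contradiction: suppose $C_i$ contains more than $10\sqrt{n}$ vertices that are $i$-alive. Each such vertex $v_k$ contributes strictly more than $(1+10\sqrt{\epsilon})c^*\sqrt{n-i+1}$ to $\mu_i(C_i)$, so summing only over these vertices,
\[
\mu_i(C_i) \;>\; 10\sqrt{n}\cdot(1+10\sqrt\epsilon)\,c^*\sqrt{n-i+1} \;>\; 4.9\sqrt{n(n-i+1)} \;\geq\; 4.9(n-i+1),
\]
where the final step uses $n\geq n-i+1$ and $10c^*\approx 4.9$. For $i<(1-\epsilon)n$ this directly contradicts~\eqref{degmeas}, which gives $\mu_i(C_i)\leq (1+\epsilon^{2})(n-i+1)$, since $4.9>1+\epsilon^{2}$ for small $\epsilon$.

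The only bookkeeping point is the tail range $i\geq (1-\epsilon)n$, where~\eqref{degmeas} is not asserted by Lemma~\ref{rightcontainers}. When $n-i+1\leq 10\sqrt n$ the conclusion is immediate from $|C_i|\leq n-i+1$; for the intermediate late indices one can re-run the container algorithm of Lemma~\ref{rightcontainers} on the $C_4$-free graph $G_i$ itself to recover the analogous degree-measure bound with $n-i+1$ in place of $n$, after which the displayed calculation closes the case.

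I expect no substantive obstacle: the whole proof is a one-line application of~\eqref{degmeas} combined with the ``sum over alive vertices only'' lower bound, and the deliberately loose constant $10$ in $10\sqrt n$ is chosen precisely to absorb the technicalities of the tail regime.
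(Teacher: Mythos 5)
Your proof is correct and is, in spirit, identical to the paper's: the paper's entire proof of Lemma~\ref{otherfewlarge} is the single sentence ``If this was not the case, then the degree measure of $C_i$ would violate the constraint~(\ref{degmeas}),'' and your displayed chain of inequalities simply makes that violation explicit, using that each $i$-alive vertex contributes more than $(1+10\sqrt\epsilon)c^*\sqrt{n-i+1}$ to $\mu_i(C_i)$ so that $10\sqrt n$ of them already force $\mu_i(C_i) > 10c^*\sqrt{n}\sqrt{n-i+1}\geq 4.9(n-i+1) > (1+\epsilon^2)(n-i+1)$. One point where you go beyond the written proof, and rightly so: the paper states the lemma for ``every $i$'' but only asserts~(\ref{degmeas}) for $i<(1-\epsilon)n$, and downstream (e.g.\ in bounding $|E(G')|$ via~(\ref{upperboundnewgraph})) the lemma is indeed invoked for $i\geq(1-\epsilon)n$, so some version of your tail discussion is genuinely needed; your split into ``$n-i+1\leq 10\sqrt n$, which is trivial'' and ``otherwise, run the container construction on $G_i$ to recover the same degree-measure bound'' is the right patch, though it remains a sketch, exactly as the corresponding step is merely sketched in Lemma~\ref{rightcontainers}.
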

\begin{proof}
If this was not the case, then the degree measure of $C_i$ would violate the constraint (\ref{degmeas}).\qed
\end{proof}

Now we show that the ``excess edges" cannot be coming from only very few vertices - instead they must be rather evenly distributed among a linear proportion of $V(G)$.

\begin{lem}\label{noversevenlarge} We have $$|L\backslash W|>\frac{n}{7}.
$$
\end{lem}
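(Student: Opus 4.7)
The plan is to assume for contradiction that $|L\setminus W|\le n/7$ and derive a contradiction by double counting pairs $(i,v_k)$ with $v_k\in C_i$ and $v_k\in L_i$. On the upper side, Proposition~\ref{fewlarge} (for the non-win $v_i$ with $i\le(1-\epsilon)n$) combined with Lemma~\ref{otherfewlarge} (for the at most $|W|+\epsilon n\le 2\epsilon n$ remaining indices) gives
\[
\sum_{i}|C_i\cap L_i|\ \le\ \tfrac{2}{3}\sqrt{\epsilon}\,n^{3/2}+20\epsilon\,n^{3/2}\ =\ O(\sqrt{\epsilon})\,n^{3/2},
\]
which is negligible once $\epsilon$ is small enough. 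The rest of the argument is devoted to producing a matching lower bound of order $n^{3/2}$ coming from huge vertices.

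First I would exploit the edge count in (\ref{edges}) together with the uniform bound $d_v\le(1+30\sqrt{\epsilon})c^*\sqrt n$ on the complement of $L$ to get $\sum_{v\in L}d_v\ge c^*(\tfrac13+|L|/n)\,n^{3/2}-O(\sqrt{\epsilon})n^{3/2}$. Splitting $L=H\cup(L\setminus H)$ and using $d_v\le\sqrt n$ for $v\in L\setminus H$ yields
\[
\sum_{v\in H}(d_v-\sqrt n)\ \ge\ \sum_{v\in L}d_v-|L|\sqrt n\ \ge\ \Bigl[\tfrac{c^*}{3}+(c^*-1)\tfrac{|L|}{n}\Bigr]n^{3/2}-O(\sqrt{\epsilon})n^{3/2},
\]
which at $|L|/n\le 1/7$ and $c^*\approx 0.49$ is at least $0.09\,n^{3/2}-O(\sqrt{\epsilon})n^{3/2}$. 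This is precisely where the constant $1/7$ is used: it is chosen so that $c^*/3-(1-c^*)/7>0$ with slack to absorb all error terms. Next, for each $v_k\in H$ with left-neighbours at positions $j_1<\cdots<j_{m_k}$, the identity $d(v_k,G_{j_s})=d_k-s+1$ together with $\sqrt{n-j_s+1}\le\sqrt n$ shows that $(j_s,k)$ is a good pair whenever $s\le d_k-(1+10\sqrt{\epsilon})c^*\sqrt n$. Combining this with $m_k\ge d_k-\sqrt n-1$ from (\ref{maxdegree}) and $(1+10\sqrt\epsilon)c^*<1$, a short case analysis on whether $d_k^*$ is above or below $(1+10\sqrt{\epsilon})c^*\sqrt n$ shows that the number of good $s$ for $v_k$ is at least $d_k-\sqrt n-1$, regardless of where the $j_s$ actually sit. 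Summing over $v_k\in H$ produces at least $\sum_{v\in H}(d_v-\sqrt n)-|H|\ge 0.08\,n^{3/2}$ good pairs once $n$ is large and $\epsilon$ is small, and each such pair witnesses $v_k\in C_{j_s}\cap L_{j_s}$, violating the upper bound.

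The main obstacle is the per-vertex lower bound of $d_k-\sqrt n-1$ good pairs for $v_k\in H$: to make it uniform in the unknown positions of $v_k$'s left-neighbours, one must use only the position-free estimate $\sqrt{n-j_s+1}\le\sqrt n$ in the definition of $L_i$, which gives away a factor but costs nothing in the final counting. The remaining pieces, namely the excess-degree accounting, the $H$ versus $L\setminus H$ split, and matching against the error terms, are straightforward; the slack built into the constant $1/7$ ensures that all $O(\sqrt{\epsilon})n^{3/2}$ error terms are comfortably dominated by the $0.09\,n^{3/2}$ main term.
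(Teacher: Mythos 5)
Your proof is correct and follows essentially the same route as the paper's: both double-count pairs $(i,v_k)$ with $v_k\in C_i\cap L_i$, bound the total from above via Proposition~\ref{fewlarge} and Lemma~\ref{otherfewlarge}, and bound it from below by observing that each huge vertex $v_k$ contributes at least $d_k-\sqrt n-1$ such pairs through its left-neighbours. The paper packages this count as the edge set of an auxiliary bipartite graph $G'$ with a capping condition on prior container appearances, whereas you index the pairs directly via the identity $d(v_k,G_{j_s})=d_k-s+1$, and you retain the $c^*|L|\sqrt n$ term that the paper drops (so your argument would actually push the constant slightly beyond $1/7$); these are bookkeeping differences, not a different approach.
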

\begin{proof}
Let $v_k$ be a huge vertex. Then regardless of whether $v_k$ is win or not, it has to be contained in at least $d_{v_k}-\sqrt{n-k}-1$ containers $C_i$ with $i<k$, as $d^*_k\le \sqrt{n-k}+1$.
Consider a bipartite graph $G'$ with vertex sets $A=[n]$ and $B=H=\{h_1,\ldots, h_{|H|}\}$. Add the edge $(i,h_j)$ if the following two conditions hold:
\begin{enumerate}
\item $C_i$ contains $h_j$, and
\item there are at most $d_{h_j}-\sqrt{n}-1$ vertices $v_t$ with $t<i$ such that $C_t$ contains $h_j$.
\end{enumerate}  Then in $G'$, every vertex $h_i$ in $B$ has degree at least $d_{h_i}-\sqrt{n}$ (in fact their degrees are $\lceil d_{h_i}-\sqrt{n} \rceil$). So 
\begin{equation}\label{bottomdegs}
|E(G')|\geq \sum_{H} (d_v-\sqrt{n}).
\end{equation}

Now it's time to use Proposition \ref{fewlarge}. If $(i,h_j)$ is an edge in $G'$ and $v_i$ is non-win, then $d(h_j,G_i)\geq \sqrt{n}$. (Why? Because $v_j$ appeared in at most $d_{h_j}-\sqrt{n}$ containers prior to $i$.) Hence $h_j$ is $i$-alive. So in $G'$, the degree of a non-win vertex $v_i$ in $A$ is at most $\sqrt{\epsilon n}$ if $i<(1-\epsilon)n$, and at most $10\sqrt{n}$ if $v_i$ is non-win with $i\geq(1-\epsilon)n$ by Lemma \ref{otherfewlarge}. There are at most $\epsilon n$ win vertices in total, and similarly each win vertex $v_i$ has a container containing at most $10\sqrt{n}$ vertices that are $i$-alive.  Hence 
\begin{equation}\label{upperboundnewgraph}
|E(G')| \leq \sqrt{\epsilon n}n + 20\epsilon n^{3/2} < 2\sqrt{\epsilon}n^{3/2}.
\end{equation}

Note that by equation (\ref{edges}) we have that
\begin{equation}\label{largeedges}
\sum_{L}d_v=2|E(G)|-\sum_{[n]\backslash L}d_v \geq \frac{4}{3}c^*(1-5\epsilon)n^{3/2}-(1+30\sqrt{\epsilon})c^*n^{3/2} 
\geq c^* n^{3/2} \left( \frac{1}{3} - 50\sqrt{\epsilon}\right)
\end{equation}
and by using the definition of a huge vertex, a corollary of equations  (\ref{bottomdegs}) and (\ref{largeedges}) is that
\begin{equation}\label{lowerboundnewgraph}
|E(G')|\geq \sum_{L}(d_v-\sqrt{n}) \geq c^* n^{3/2} \left( \frac{1}{3} - 50\sqrt{\epsilon}\right)- |L|\sqrt{n}.
\end{equation}
Putting equations (\ref{upperboundnewgraph}) and (\ref{lowerboundnewgraph}) together, we get 
\begin{equation*}
2\sqrt{\epsilon}n^{3/2}+|L|\sqrt{n}>c^* n^{3/2} \left( \frac{1}{3} - 50\sqrt{\epsilon}\right).
\end{equation*}
Because 
\begin{equation}\label{Lsizebound}
|L|>n\left(\frac{c^*}{3}-100\sqrt{\epsilon}\right)
\end{equation}
and  $|W|<\epsilon n$, we get 

\begin{equation}\label{LWsizebound}
|L\backslash W|>n\left(\frac{c^*}{3}-101\sqrt{\epsilon}\right)>\frac{n}{7}.
\end{equation}
\smartqed
\qed
\end{proof}

Now we can finish the proof of Theorem~\ref{kleitmanconstantimprovement}. Note that if a large non-win vertex $v_p$ appeared in at most $20\sqrt{\epsilon}c^*\sqrt{n}$ different $C_k$ with $k<i<p$ then $v_p$ is $i$-alive. As  in the proof of Lemma~\ref{noversevenlarge}, let us create a bipartite graph and count the edges in two ways.

Consider a bipartite graph $G''$ with vertex sets $A=[n]$ and $B=L\backslash W=\{p_1,\ldots,$ $ p_{|L\backslash W|}\}$. Add the edge $(i,p_j)$ if the following two conditions hold:
\begin{enumerate}
\item $C_i$ contains $p_j$, and
\item There are at most $20\sqrt{\epsilon}c^*\sqrt{n}$ vertices $v_t$ with $t<i$ such that $C_t$ contains $p_j$.
\end{enumerate}  
As before, we have that 
\begin{equation}
(\sqrt{\epsilon}+20\epsilon)n^{3/2}\geq E(G'')\geq 20\sqrt{\epsilon}c^*\sqrt{n}|L\backslash W|>\frac{20c^*}{7}\sqrt{\epsilon}n^{3/2},
\end{equation}
which is a contradiction, since $\frac{20c^*}{7} \approx 1.4>1$. This completes the proof. \qed

\medskip

The original question still remains open.  In general, whether the number of $H$-free graphs with vertex set $[n]$ is 
$2^{O(\ex(n,H))}$ is still not known for many bipartite graphs $H$. When the order of magnitude of $\ex(n,H)$ is known then the situation is better: see \cite{wkmm} and \cite{wkst} for when  $H$ is a complete bipartite graph, and see \cite{morrissaxton} for when $H$ is an even cycle.

\section*{Acknowledgements}
The authors are grateful to Hong Liu, Maryam Sharifzadeh and Wojciech Samotij  for  careful reading of the manuscript.

\end{document}